\newtheorem{theorem}{Theorem}
\newtheorem{definition}[theorem]{Definition}
\newtheorem{lemma}[theorem]{Lemma}
\newtheorem{corollary}[theorem]{Corollary}
\newtheorem{conjecture}[theorem]{Conjecture}
\newtheorem{proposition}[theorem]{Proposition}
\newcommand\EE{\mathbb{E}}
\begin{document}

\date{}
\author[1]{Jesse Campion Loth}
\author[1]{Kevin Halasz}
\author[1]{Tomáš Masařík\thanks{T.M.~was supported by a postdoctoral fellowship at the Simon Fraser University through NSERC grants R611450 and R611368.}}
\author[1]{\\Bojan Mohar\thanks{B.M.~was supported in part by the NSERC Discovery Grant R611450 (Canada) and by the Research Project J1-8130 of ARRS (Slovenia).}}
\author[2]{Robert Šámal\thanks{
R.S. was partially supported by grant 19-21082S of the Czech Science Foundation.
This project has received funding from the European Research Council
(ERC) under the European Union’s Horizon 2020 research and innovation
programme (grant agreement No 810115). 
This project has received funding from the European Union’s Horizon 2020
research and innovation programme under the Marie Skłodowska-Curie grant
agreement No 823748.}}

\affil[1]{Department of Mathematics, Simon Fraser University, Burnaby, BC, V5A 1S6, Canada\\
\texttt{\{jcampion, khalasz, tmasarik, mohar\}@sfu.ca}}

\affil[2]{Computer Science Institute, Faculty of Mathematics and Physics, Charles University, Praha, 118 00, Czech Republic\\
\texttt{samal@iuuk.mff.cuni.cz}}

\title{Random 2-cell embeddings of multistars}

\maketitle

\begin{abstract}
Random 2-cell embeddings of a given graph $G$ are obtained by choosing a random local rotation around every vertex.
  We analyze the expected number of faces, $\EE[F_G]$, of such an embedding which is equivalent to studying its average genus.
 So far, tight results are known for two families called monopoles and dipoles.
  We extend the dipole result to a more general family called multistars, i.e., loopless multigraphs in which there is a vertex incident with all the edges.  
    In particular, we 
    show that the expected number of faces of every multistar with $n$ nonleaf edges lies in an interval of length $2/(n+1)$ centered at the expected number of faces of an $n$-edge dipole.
  This allows us to derive bounds on $\EE[F_G]$ for any given graph $G$ in terms of vertex degrees.
  We conjecture that $\EE[F_G]\le O(n)$ for any simple $n$-vertex graph $G$. %
\end{abstract}

\section{Introduction}

By an \emph{embedding} of a graph $G$, we mean a 2-cell embedding of $G$ in some orientable surface. Two embeddings of $G$ are \emph{equivalent} if there is an orientation-preserving homeomorphism of the surface mapping the graph in one embedding onto the graph in the other, and the restriction of the homeomorphism to the graph is the identity isomorphism. Equivalent embeddings are considered the same since they define the same \emph{map}, where a map is considered as the incidence structure of vertices, edges, and faces of the embedding. It is well known \cite{MT01,Wh73} that equivalence classes of 2-cell embeddings of $G$ (i.e., maps whose underlying graph is $G$) are in bijective correspondence with \emph{local rotations}, where for each vertex $v\in V(G)$ we prescribe a cyclic permutation $\pi_v$ of the half-edges, or \emph{darts}, incident with $v$.

We consider the ensemble of all maps of $G$ endowed with the uniform probability distribution. The genus and the number of faces of a random map of $G$ become random variables in this setting. This gives rise to the notion of the \emph{average genus} of the graph and leads to \emph{random topological graph theory} as termed by White \cite{Wh94}.  A monograph by Lando and Zvonkin \cite{LandoZvonkin} provides a thorough treatment of various applications of random graph embeddings, ranging from graph theory and combinatorics to abstract algebra and theoretical physics. 

Two special cases of random embeddings are well understood. The first one is when the graph is a bouquet of $n$ loops (also called a \emph{monopole}), which is the graph with a single vertex and $n$ loops incident with the vertex; see \cite{Ch11,Gross89,Ja87,Jackson1994_integral,Za95}. By duality, the maps of the monopole with $n$ loops correspond to unicellular maps \cite{Ch11} with $n$ edges. The second well-studied case is the \emph{$n$-dipole}, a two-vertex graph with $n$ edges joining the two vertices; see \cite{Andrews1994,Chen2020,CR16,CMS12,Jackson1994_algebraic,Jackson1994_integral,KL93,RiThesis,St11}.

Here we consider the more general family of graphs, called \emph{multistars}.
These are loopless multigraphs in which there is a vertex incident with all the edges. Formally, we have one center vertex $v_0$ incident with $n$ edges; these edges lead to $k\ge1$ other vertices, $v_1,\dots,v_k$, with $n_i$ edges between $v_0$ and $v_i$ ($1\le i\le k$). As $n_1\ge n_2\ge \cdots \ge n_k\ge 1$, where $\sum_{i=1}^k n_i = n$, we see that multistars with $n$ edges are in bijective correspondence with partitions of $n$. 
An expression for the genus polynomials of multistars was obtained by Stanley \cite{St11} in the language of products of permutation in conjugacy classes.
Our main results use this formulation to derive precise bounds for the expected genus of these graphs (see Section~\ref{sect:multistars}).

It is important to note that we are not merely interested in multistars as another family of graphs whose genus distribution may be analyzed. Rather, we are primarily motivated by the fact that multistars are useful in the study of random embeddings of arbitrary graphs. We explain why this is so in Section \ref{sect:adding a vertex}; see also the overview in Section \ref{sect:our results} below.

Although most previous works in random topological graph theory concern the (average) genus, we decided to consider the (average) number of faces, which is an equivalent quantity by the Euler-Poincar\'{e} formula, but gives more appealing statements.

\subsection{Our results}
\label{sect:our results}

The paper is organized as follows.
In Section~\ref{sec:dipole}, we show that the expected number of faces for a random embedding of a dipole with $n$ edges is precisely $H_{n-1} + \left\lceil \frac{n}{2} \right\rceil^{-1}$, where $H_{n-1} = 1+\tfrac{1}{2}+\tfrac{1}{3}+\cdots + \tfrac{1}{n-1}$ is the harmonic sum (see Corollary~\ref{cor:dipoleE}). Previously, Stahl \cite{Stahl1995} proved that the average number of faces is at most $H_{n-1} + 1$. It is worth noting that we are able to obtain our exact result with a relatively short proof based on Stanley's generating function~\cite{St11}.

In Section~\ref{sect:multistars} we extend the dipole result to multistars, showing that they have the same expected number of faces as dipoles up to a difference of $\pm\tfrac{1}{n+1}$ (see Theorem~\ref{thm:asymmulti}). In Section~\ref{sect:adding a vertex} we note that the result about multistars can be used in a more general setting, where we consider a map to which we add a new vertex and consider the expected number of new faces added after doing so.  In particular, our Theorem~\ref{thm:newvert} shows that the expected number of new faces obtained when adding a new vertex of degree $d$ is at most $\log(d)+1$ (where we use $\log(\cdot)$ to denote the natural logarithm). This immediately implies an old result of Stahl \cite{St91Short} that the expected number of faces in a random embedding of an arbitrary graph of order $n$ is at most $n\log(n)$. 

We also exhibit a general construction of graph families with a linear number of expected faces. However, we were not able to find a family with a superlinear number of expected faces and we ended up proposing the following conjecture.

\begin{conjecture}\label{conj:simple}
For every $n$-vertex simple graph $G$, the expected number of faces when selecting an orientable embedding of $G$ uniformly at random is $O(n)$.
\end{conjecture}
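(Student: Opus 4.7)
The natural first attempt is to build $G$ one vertex at a time and apply Theorem~\ref{thm:newvert}. Fix an ordering $v_1,\dots,v_n$ of $V(G)$ and let $d_i$ denote the back-degree, i.e.\ the number of neighbours of $v_i$ in $\{v_1,\dots,v_{i-1}\}$. Iterating the theorem gives
\[
  \EE[F_G]\;\le\;n\;+\;\sum_{i=1}^{n}\log(d_i),
\]
so the conjecture reduces to finding an ordering with $\sum \log(d_i) = O(n)$, that is, with the geometric mean of the back-degrees bounded by a constant. For graphs of bounded degeneracy $k$ the standard degeneracy ordering gives $d_i\le k$ and hence $\sum\log(d_i)\le n\log k=O(n)$, so the conjecture is immediate for planar graphs, graphs of bounded treewidth, proper minor-closed classes, and graphs of bounded arboricity.

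The real obstacle is dense graphs. For $K_n$ any vertex ordering forces $\sum\log(d_i) = \log((n-1)!) = \Theta(n\log n)$, exactly matching Stahl's $n\log n$ bound, so Theorem~\ref{thm:newvert} on its own cannot yield the conjecture. My plan is to prove a \emph{face-sensitive} refinement of Theorem~\ref{thm:newvert} in which the upper bound $\log(d)+1$ is replaced by something of the form $\log(\min\{d,F\})+1$, where $F$ is the number of faces of the host embedding before the new vertex is attached. The intuition is that a newly inserted vertex of high degree cannot create more new faces than there are existing face-boundary segments to split; so if the running face count stays $O(n)$, then the per-step contribution drops to $O(\log n)$ or better, and carefully tracking an inductive invariant ``face count $\le c\cdot i$ after $i$ vertices'' should carry the argument through.

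The step I expect to be genuinely hard is verifying the base case, namely $\EE[F_{K_n}]=O(n)$, since this already amounts to saying that the average genus of $K_n$ is within a constant factor of its maximum genus, a direction that has resisted attack for decades. A natural approach is to extend Stanley's character-sum formula for genus polynomials of multistars~\cite{St11}, on which the proofs of Section~\ref{sect:multistars} rely, to products of transpositions (which generate the clique) via manipulations in the symmetric group algebra. Once such a dense ``core'' bound is in place, the general case should follow by decomposing an arbitrary simple graph into a dense subgraph handled by the core bound plus a sparse remainder attached via vertices of controlled back-degree and bounded by the degeneracy argument above; the delicate point, and the one I cannot finesse a priori, is ensuring that this decomposition loses only $O(n)$ expected faces at the interface between the two parts.
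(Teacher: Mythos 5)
The statement you are trying to prove is Conjecture~\ref{conj:simple}, which the paper explicitly leaves open: the authors state they ``were not able to find a family with a superlinear number of expected faces'' and offer the conjecture precisely because the methods of the paper do not settle it. There is therefore no proof in the paper to compare against, and your submission is, by your own account, a research plan rather than a proof: the claimed ``face-sensitive'' refinement of Theorem~\ref{thm:newvert} is not proved, the base case $\EE[F_{K_n}]=O(n)$ is flagged as ``genuinely hard,'' and the final decomposition step is one you ``cannot finesse a priori.'' The parts you do carry out correctly --- the vertex-by-vertex bound $\EE[F_G]\le 1+\sum_i\log d_i^*$ and its consequence for bounded-degeneracy classes, and the observation that $K_n$ forces $\sum_i\log d_i=\Theta(n\log n)$ under any ordering --- are exactly the paper's Theorem~\ref{thm:degbnd} and Corollary~\ref{cor:degen} together with the remark that this only recovers Stahl's $n\log n$ bound in the dense regime. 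So the content that is verified is already in the paper, and the content that would prove the conjecture is missing.

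Worse, the one new idea you propose is refuted by the paper's own computations. You suggest that the expected number of faces containing a newly added vertex of degree $d$ should be at most $\log(\min\{d,F\})+1$, where $F$ is the face count of the host embedding, on the grounds that a new vertex ``cannot create more new faces than there are existing face-boundary segments to split.'' Take the host to be a single vertex (one face, $F=1$) and attach $v$ by $d$ parallel edges: all $d$ edges land in the unique face, the resulting graph is the dipole $D_d$, and every face of $D_d$ contains $v$. Corollary~\ref{cor:dipoleE} gives $\EE(F_{D_d})=H_{d-1}+\lceil d/2\rceil^{-1}\approx\log d$, whereas your bound would give $\log(\min\{d,1\})+1=1$. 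More generally, Theorem~\ref{thm:asymmulti} shows that the expected number of new faces is governed by $d-r(\lambda)$ (the number of non-leaf edges of the attachment multistar) and is essentially \emph{independent} of the number of parts of $\lambda$, i.e.\ of how many host faces $v$ attaches to; concentrating the edges into few faces does not reduce the expected count below $\approx H_{d-1}$. So the inductive invariant you hope to maintain cannot be driven by the host's face count, and the proposed route collapses at this step before one even reaches the (genuinely open) question of $\EE[F_{K_n}]$.
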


We use multistars to obtain new upper bounds for the expected number of faces of several families of graphs on $n$ vertices.  A notable outcome is for $d$-regular graphs, where the conclusion is that the expected number of faces is at most $n\log(d)$. More generally, the same result works for $d$-degenerate graphs (see Theorem~\ref{thm:degbnd} and Corollary~\ref{cor:degen}). Both of these results can be considered as supporting evidence for Conjecture \ref{conj:simple}.

When we allow multiple edges, Conjecture \ref{conj:simple} has to be adjusted.

\begin{conjecture}
For every $n$-vertex multigraph $G$ with maximum edge-multiplicity $\mu$, the expected number of faces when selecting an orientable embedding of $G$ uniformly at random is $O(n \log (2\mu))$.\label{endingconj}
\end{conjecture}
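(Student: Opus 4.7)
The plan is to build on the vertex-addition framework introduced in Section~\ref{sect:adding a vertex}. Order the vertices of $G$ as $v_1,\dots,v_n$ and add them one at a time; for each $v_i$ let $N_i$ denote the expected number of faces created when $v_i$ is inserted, so that $\EE[F_G] \le 1 + \sum_i \EE[N_i]$. When $v_i$ is inserted, its incident edges to previously-placed vertices form a multistar with a partition $(n_{i,1},\dots,n_{i,k_i})$ satisfying $n_{i,j}\le \mu$ and $\sum_j n_{i,j}=d_i^-$, where $d_i^-$ is the back-degree. The direct bound from Theorem~\ref{thm:newvert} gives $\EE[N_i]\le \log(d_i^-)+1$, which already recovers Stahl's $O(n\log n)$ estimate but is visibly too weak: the right-hand side involves the degree, not the multiplicity.

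The first key step would be to strengthen Theorem~\ref{thm:newvert} so that it sees the partition structure of the incident multistar and not only the total degree. Concretely, I would try to establish a bound of the form
\[
\EE[N_i] \;\le\; \alpha\,\log(2\mu) \;+\; \beta_i,
\]
where $\beta_i$ depends only on $k_i$ (the number of distinct neighbours of $v_i$ among $v_1,\dots,v_{i-1}$), not on the multiplicities. The multistar results of Section~\ref{sect:multistars}, which show that the partition only shifts the expected face count by $O(1/n)$ around the dipole value $H_{n-1}$, suggest that the ``new face'' contribution from a single multistar bundle of size $n_{i,j}$ behaves, on average, like its dipole analogue of size $n_{i,j}\le \mu$. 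If such a decoupling holds, summing yields $\EE[F_G]=O(n\log(2\mu)) + \sum_i \beta_i$, reducing the problem to controlling the \emph{simple-graph residual} $\sum_i \beta_i$.

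The second step is to identify $\sum_i \beta_i$ with the expected face count of an embedding of the underlying simple graph $G_s$ (or a quantity bounded by it). Since $G_s$ is a simple $n$-vertex graph, Conjecture~\ref{conj:simple} would give $\sum_i \beta_i = O(n)$, and combining the two estimates yields the target bound $O(n\log(2\mu))$. As evidence that this decomposition is the correct one, I would first verify it on complete multigraphs $K_n^{(\mu)}$, $d$-regular multigraphs, and multigraphs whose underlying simple graph is $d$-degenerate, where Theorem~\ref{thm:degbnd} and Corollary~\ref{cor:degen} can be upgraded along the same lines.

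The main obstacle is clear and essentially unavoidable: Conjecture~\ref{endingconj} specialises to Conjecture~\ref{conj:simple} when $\mu=1$, so any route to a proof must grapple with the open simple-graph case. Even granting Conjecture~\ref{conj:simple}, the ``decoupling'' step in the previous paragraph is delicate, because the multistar at $v_i$ shares its leaves with the existing partial embedding, so the new faces depend on how the bundle rotations interact with the ambient rotations at the $k_i$ neighbours. Controlling this interaction appears to require either an analogue of Stanley's product-of-conjugacy-class formalism (extending the approach used in Section~\ref{sect:multistars}) that allows conditioning on the partial embedding, or a global argument that bypasses vertex addition by directly estimating average face length in a random embedding and bounding it from below by $\Omega(|E|/(n\log(2\mu)))$.
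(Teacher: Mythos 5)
The statement you are addressing is Conjecture~\ref{endingconj}: the paper does not prove it, and offers only examples (dipoles, and paths with alternate edges blown up into dipoles) showing the bound would be tight. So there is no proof of record to compare against, and the question is only whether your proposal closes the gap. It does not, for two distinct reasons. The first you acknowledge yourself: your plan bounds the ``simple-graph residual'' $\sum_i\beta_i$ by invoking Conjecture~\ref{conj:simple}, which is open; and since Conjecture~\ref{endingconj} with $\mu=1$ \emph{is} Conjecture~\ref{conj:simple}, no argument of this shape can be unconditional. At best you would obtain a conditional implication, which is a legitimate but different result.

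The second gap is in the ``decoupling'' step itself, and it is more than a technicality. The partition that actually governs the expected number of faces containing the newly added vertex $v_i$ is not the multiplicity profile $(n_{i,1},\dots,n_{i,k_i})$ of its back-edges, but the \emph{face partition}: in the proof of Theorem~\ref{thm:newvert}, two darts at $v$ are equivalent when Step~1 of the insertion process places them into the same face of the ambient embedding $\Pi$. Two parallel edges to the same neighbour may land in different faces, and edges to distinct neighbours may land in the same face, so the face partition is a random object essentially unconstrained by $\mu$ and $k_i$. Moreover, Theorem~\ref{thm:asymmulti} says that for \emph{every} partition $\lambda\vdash d$ (with no singleton parts) the expected face count of $K_\lambda(d)$ is within $1/(d+1)$ of $H_{d-1}+\lceil d/2\rceil^{-1}\approx\log d$; that is, the multistar machinery is insensitive to the shape of $\lambda$ and always returns roughly $\log(\deg v_i)$, not $\log(2\mu)$ plus a term in $k_i$. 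To get a genuine $\alpha\log(2\mu)+\beta(k_i)$ bound you would need to show that the number of faces \emph{destroyed} by the insertion (one per distinct face met, i.e.\ the number of parts of the face partition) is typically large enough to cancel $\log(\deg v_i)-\log(2\mu)-\beta(k_i)$, and nothing in Sections~\ref{sect:multistars} or \ref{sect:adding a vertex} controls that quantity. Your own closing paragraph gestures at this difficulty, but as written the central lemma of your plan is unsupported by the tools you cite, so the proposal is a research programme rather than a proof.
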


Notice that the dipole, considered in Section~\ref{sec:dipole}, gives a family of graphs for which Conjecture~\ref{endingconj} is tight. Moreover, a long path in which every second edge is replaced by a dipole with $\mu\ge2$ edges gives a tight family in which each of $n$ and $\mu$ can independently tend to infinity.

\section{The dipole}
\label{sec:dipole}

Embeddings of monopoles and dipoles have connections to other areas. Thus it is not surprising that they have been extensively studies.
The genus distribution of the monopole can be traced back to a celebrated result of Harer and Zagier~\cite{HZ86}, who used matrix integrals in their proofs. Independently, Jackson \cite{Ja87} proved an analogous result by a different method, using the character theory of the symmetric group. However, it was Gross, Robbins, and Tucker~\cite{Gross89} who used Jackson's result to state the genus distribution of the monopole explicitly. Later, Zagier \cite{Za95} found another, shorter proof using character theory of the symmetric group.
A combinatorial proof was found later by Chapuy \cite{Ch11}, who used it in the enumeration of unicellular maps. 
The corresponding result for dipoles was given by Rieper \cite{RiThesis} in his PhD thesis. The dipole genus distribution was found independently by Kwak and Lee \cite{KL93} and also by Jackson~\cite{Jackson1994_integral}, who provided the genus distribution for both monopoles %
and dipoles %
using integral representations.
Shortly after, Andrews, Jackson, and Visentin %
\cite{Andrews1994} provided a parity-specific genus distribution.
In \cite{Jackson1994_algebraic}, Jackson gave an overview of the methods used to compute genus distributions of 2-cell embeddings on orientable as well as nonorientable surfaces.
The result for dipoles was later reproved by Zagier \cite{Za95} (using character theory) and Stanley \cite{St11} (a combinatorial proof using symmetric functions). Cori, Marcus, and Schaefer \cite{CMS12} found a first combinatorial proof. A generalized version (dipoles with loops) was considered by Goulden and Slofstra \cite{GS10} and Gross, Mansour, and Tucker \cite{GMT17}. Recently, Chen and Reidys~\cite{CR16}, and  Chen~\cite{Chen2020} gave another purely combinatorial proof.

Here we state an exact formula for the expected number of faces of dipoles (Corollary \ref{cor:dipoleE} below). Notably, even though the precise genus distribution of dipoles is known, we were not able to locate the explicit computation of the average genus or any mention of this result. The formula of Corollary~\ref{cor:dipoleE} is surprisingly nice and serves as a comparison to the results about multistars that we treat later.

Before proceeding, we outline some notation. Let $[n] := \{1,\dots,n\}$ and let $S_n$ be the symmetric group acting on the set $[n]$. Furthermore, let $C_n \subseteq S_n$ be the set of \emph{full cycles} of length $n$, i.e., the permutations in $S_n$ with precisely one cycle of length $n$.
For any real number $x$ and a positive integer $k$, we denote by $(x)_k = x(x-1)\cdots (x-k+1)$ the \emph{falling factorial} of $x$.
Let $c(n,k)$ be the unsigned Stirling number of the first kind, and $s(n,k)$ be the signed equivalent, such that $s(n,k) = (-1)^{n-k}c(n,k)$.  Let $H_n$ be the $n^{\rm th}$ harmonic number: $H_n = 1 + 1/2 + 1/3 + \dots + 1/n$. For convenience we also set $H_0=0$. The value of $H_n$ is asymptotically logarithmic: $\lim_{n \to +\infty} (H_n - \log(n)) = \gamma$ where $\gamma \approx 0.5772$ is the Euler-Mascheroni constant.

We will discuss random embeddings of the \emph{dipole} $D_n$: the graph with two vertices and $n$ parallel edges joining them.  Each embedding of $D_n$ is determined by the local rotations at both vertices. In this case, each local rotation is a full cycle in $C_n$.  This means there is a bijection between embeddings of $D_n$ and pairs $\{(\sigma, \tau): \sigma, \tau \in C_n\}$.  It is then fairly easy to see that the faces in an embedding given by $(\sigma, \tau)$ correspond to the cycles in the permutation product $\sigma \tau$.

Calculating the expected number of faces in an embedding of $D_n$ is therefore equivalent to calculating the expected number of cycles in a product of two full cycles taken randomly from $C_n$.  The labelling on the symbols in $S_n$ is arbitrary, so we may fix one of the full cycles to be $\sigma = (1 \, 2 \, 3 \, \dots \, n)$ and just consider the set $\{(\sigma, \tau): \tau \in C_n\}$.  Let $F$ be the random variable for the number of cycles in $\sigma \tau$ when $\tau$ is chosen uniformly at random from $C_n$.  Therefore the expected number of faces in a random embedding of $D_n$ is equal to $\EE[F]$.

The combinatorial problem of finding the number of cycles in a product of two full cycles has already been the object of extensive research.  First note that the permutation parity argument implies that the number of pairs $(\sigma, \tau) \in C_n\times C_n$ for which the product $\sigma\tau$ has $k$ cycles is zero when $k \not\equiv n \pmod 2$.  
Stanley \cite[Corollary 3.4]{St11} proved the following result.

\begin{theorem}[Stanley \cite{St11}]
\label{thm:dipoledistro}
    The number of cyclic permutations $\tau \in C_n$ such that the product 
    $(1 \, 2 \, 3 \, \dots \, n) \tau$ has $k$ cycles is equal to $\tfrac{2}{n(n+1)} c(n+1,k)$ if $n-k$ is even (and is zero if $n-k$ is odd).
\end{theorem}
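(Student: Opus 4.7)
Setting $\sigma = (1\,2\,\cdots\,n)$, the product $\sigma\tau$ has sign $(-1)^{n-1}\cdot(-1)^{n-1} = 1$, so $\sigma\tau$ is always an even permutation; this forces $n-k$ to be even and gives the vanishing claim.

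\textbf{From characters to a Jackson integral.} For the main identity I would start with Frobenius's factorization formula,
\[
\#\{(\sigma,\tau)\in C_n^2 : \sigma\tau = \pi\} \;=\; \tfrac{(n-1)!^2}{n!}\sum_{\chi}\tfrac{\chi(C_n)^2\,\chi(\pi)}{\chi(1)},
\]
with the sum running over irreducible characters of $S_n$. The Murnaghan--Nakayama rule leaves only the hook characters $\chi^{(n-l,1^l)}$, which take value $(-1)^l$ on an $n$-cycle and have dimension $\binom{n-1}{l}$. Identifying $\chi^{(n-l,1^l)}$ with the character of $\bigwedge^l$ of the standard representation yields
\[
\sum_{l=0}^{n-1}\chi^{(n-l,1^l)}(\pi)\,t^l \;=\; \frac{1}{1+t}\prod_{i=1}^k\bigl(1-(-t)^{\mu_i}\bigr),
\]
where $\mu = (\mu_1,\dots,\mu_k)$ is the cycle type of $\pi$. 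Combined with the Beta identity $1/\binom{n-1}{l} = n\int_0^1 x^l(1-x)^{n-1-l}\,dx$ and the substitution $t = x/(1-x)$, this should yield the Jackson-style integral
\[
\#\{(\sigma,\tau)\in C_n^2 : \sigma\tau = \pi\} \;=\; (n-1)!\int_0^1\prod_{i=1}^k\bigl((1-x)^{\mu_i}-(-x)^{\mu_i}\bigr)\,dx.
\]

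\textbf{Assembly and telescoping.} Denote the target count by $a(n,k)$. Summing the integrand over $\pi\in S_n$ with $c(\pi)=k$, weighted by $y^k$, and over $k$: expanding the product assigns each cycle one of two monomials, and grouping by the $\pi$-invariant subset $A\subseteq[n]$ of size $s$ thus selected factors the problem into independent permutations of $A$ and of $[n]\setminus A$. Using the classical $\sum_{\sigma\in S_m}y^{c(\sigma)} = y(y+1)\cdots(y+m-1)$ and $\int_0^1 x^s(1-x)^{n-s}\,dx = 1/((n+1)\binom{n}{s})$, the outcome should be
\[
\sum_{k}a(n,k)\,y^k \;=\; \frac{1}{n+1}\sum_{s=0}^n (y)_s\, y(y+1)\cdots(y+n-s-1).
\]
A short polynomial computation, based on $(y)_s(y+n-s-1) = (y)_{s+1} + (n-1)(y)_s$ for $0\le s\le n-1$, telescopes the inner sum to $[y(y+1)\cdots(y+n) - y(y-1)\cdots(y-n)]/n$. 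Expanding both factorial products via unsigned and signed Stirling numbers finally gives $y(y+1)\cdots(y+n) - y(y-1)\cdots(y-n) = 2\sum_{k:\,n-k\text{ even}}c(n+1,k)\,y^k$, producing the stated coefficient $\frac{2}{n(n+1)}c(n+1,k)$.

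\textbf{Main obstacle.} The technical heart of the argument is the passage from the character sum to the Jackson integral: the exterior-power formula for hook characters and the Beta-integral substitution have to line up cleanly. Once that integral formula is established, the combinatorial expansion into subsets $A$ and the telescoping polynomial identity are essentially routine bookkeeping.
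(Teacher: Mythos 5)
Your argument is correct, and it is worth noting that the paper does not actually prove Theorem~\ref{thm:dipoledistro}: it is quoted verbatim as Corollary~3.4 of Stanley~\cite{St11}, whose own derivation goes through symmetric functions (the same machinery behind Theorem~\ref{thm:St11}). What you give instead is a complete, self-contained proof in the character-theoretic/integral-representation style of Jackson and Zagier, which the paper's introduction mentions only as an alternative strand of the literature. I checked the individual steps: the parity argument is right ($\mathrm{sgn}(\sigma\tau)=1$ forces $(-1)^{n-k}=1$); the Frobenius prefactor $(n-1)!^2/n!$ and the restriction to hook characters with $\chi^{(n-l,1^l)}(C_n)^2=1$ and dimension $\binom{n-1}{l}$ are correct; the exterior-power generating function $\frac{1}{1+t}\prod_i(1-(-t)^{\mu_i})$ follows from $\bigwedge^\bullet(\mathbb{C}^n)=\bigwedge^\bullet V\otimes\bigwedge^\bullet\mathbf{1}$ and $\prod_{\zeta^m=1}(1+t\zeta)=1-(-t)^m$; and the substitution $t=x/(1-x)$ does cancel the $(1-x)^{n}$ cleanly, yielding the stated integral (I verified it on $\pi=e$ and on a $3$-cycle). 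The assembly step is also sound: the cycle assigned $-(-x)^{|c|}$ contributes, after summing over permutations of the invariant set $A$ of size $s$, exactly $x^s(y)_s$ because $(-y)(-y+1)\cdots(-y+s-1)=(-1)^s(y)_s$, and the Beta integral kills the $\binom{n}{s}$; the telescoping identity $\sum_{s=0}^{n}(y)_s\,y(y+1)\cdots(y+n-s-1)=\frac{1}{n}\bigl[(y+n)_{n+1}-(y)_{n+1}\bigr]$ checks out (e.g.\ it gives $3y^2$ for $n=2$), and expanding in Stirling numbers gives $\frac{2}{n(n+1)}c(n+1,k)$ on the even-parity coefficients. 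Compared with citing Stanley, your route costs you the Frobenius formula and the Murnaghan--Nakayama rule as black boxes, but it buys an explicit integral formula for the full joint class distribution of $\sigma\tau$, which is strictly more information than the theorem requires and is in fact the same intermediate object the paper's Section~\ref{sect:multistars} needs (via Theorem~\ref{thm:St11}) for general multistars. The only presentational gap is that you flag the character-to-integral passage as a potential obstacle rather than carrying it out; as verified above, it does line up, so you should simply write out that computation rather than hedge.
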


Stanley's theorem yields the face distribution for random embeddings of the dipole. As shown in the proof of Corollary~\ref{cor:dipoleE} below, it gives a simple proof about the average genus of $D_n$ using only basic combinatorial techniques. 

\begin{corollary}\label{cor:dipoleE}
Let $F$ be the number of faces in a random embedding of $D_n$, where $n\ge2$. Then
$$
  \EE(F) = \begin{cases}
    H_{n-1} + \tfrac{2}{n}, & \text{if $n$ is even;} \\
    H_{n-1} + \tfrac{2}{n+1}, & \text{if $n$ is odd}.
    \end{cases}
$$
\end{corollary}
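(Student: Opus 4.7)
The plan is to apply Theorem~\ref{thm:dipoledistro} directly and reduce the claim to a parity-restricted first-moment identity for unsigned Stirling numbers. Since $|C_n|=(n-1)!$ and $\tau$ is chosen uniformly from $C_n$,
$$ \EE[F] \;=\; \frac{1}{(n-1)!}\sum_{k:\,n-k\text{ even}} k\cdot \frac{2\,c(n+1,k)}{n(n+1)} \;=\; \frac{2}{(n+1)!}\,T_n, $$
where $T_n := \sum_{k\,\equiv\, n\,(\mathrm{mod}\,2)} k\cdot c(n+1,k)$. Thus the whole proof reduces to evaluating $T_n$ in closed form.

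To compute $T_n$, I would invoke the classical identity $P(x) := \sum_{k} c(n+1,k)\,x^k = x(x+1)(x+2)\cdots(x+n)$ and set $f(x):=xP'(x)=\sum_k k\,c(n+1,k)\,x^k$. The parity restriction is then extracted by the standard device
$$ T_n \;=\; \tfrac12\bigl(f(1)+(-1)^n f(-1)\bigr). $$
Logarithmic differentiation yields $P'(x)/P(x)=\sum_{j=0}^n 1/(x+j)$, so $f(1)=P'(1)=(n+1)!\,H_{n+1}$.

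The one step that genuinely requires attention is $f(-1)=-P'(-1)$, since $P(-1)=0$. The vanishing comes from the single factor $(x+1)$, so in the product-rule expansion $P'(x)=\sum_{j=0}^n \prod_{i\ne j}(x+i)$ only the $j=1$ summand survives at $x=-1$, giving $P'(-1)=\prod_{i\ne 1}(-1+i) = (-1)(1)(2)\cdots(n-1) = -(n-1)!$, and hence $f(-1)=(n-1)!$. Substituting,
$$ \EE[F] \;=\; \frac{2T_n}{(n+1)!} \;=\; H_{n+1} + \frac{(-1)^n}{n(n+1)}. $$
A short manipulation using $H_{n+1}=H_{n-1}+\tfrac{1}{n}+\tfrac{1}{n+1}$ separates this into the two stated cases: for $n$ even, $\tfrac{1}{n}+\tfrac{1}{n+1}+\tfrac{1}{n(n+1)}=\tfrac{2}{n}$; for $n$ odd, $\tfrac{1}{n}+\tfrac{1}{n+1}-\tfrac{1}{n(n+1)}=\tfrac{2}{n+1}$. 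I do not anticipate any genuine obstacle: the only subtleties are the parity bookkeeping (dispatched by the $x=\pm1$ trick) and the zero of $P$ at $x=-1$ (dispatched by identifying the unique nonvanishing summand of $P'$).
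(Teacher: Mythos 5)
Your proof is correct and follows essentially the same route as the paper: both apply Stanley's distribution (Theorem~\ref{thm:dipoledistro}), extract the parity-restricted first moment of $c(n+1,k)$ via the generating polynomial $\prod_{j=0}^{n}(x+j)$ evaluated at $x=\pm1$ (equivalently, the signed/unsigned Stirling identity \eqref{eq:evenstirling}), and then simplify using $H_{n+1}=H_{n-1}+\tfrac1n+\tfrac1{n+1}$. Your unified formula $\EE[F]=H_{n+1}+\tfrac{(-1)^n}{n(n+1)}$ merely packages the paper's two parity cases into one expression.
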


\begin{proof}
Suppose first that $n$ is even. Theorem~\ref{thm:dipoledistro} gives that 
\begin{equation} \EE(F) = \frac{2}{(n+1)!} \sum_{k \text{ even}} c(n+1,k) k. 
\label{eq:expF}
\end{equation}
To determine the value of this sum, we use the well-known generating functions for signed and unsigned Stirling numbers (see e.g.\! \cite[p.~942]{St11}) to obtain
\begin{equation}
\sum_{k \text{ even}} c(n+1,k) q^k = \sum_{k=1}^{n+1} \tfrac{1}{2}( c(n+1,k)-s(n+1,k)) q^k = \tfrac{1}{2}(  (q+n)_{n+1}-(q)_{n+1}).
\label{eq:evenstirling}
\end{equation}
Differentiating both sides of this equation and plugging in $q=1$, we obtain
 \[ 
 \sum_{k \text{ even}} k\, c(n+1,k) = 
 \tfrac{1}{2}\Bigl((n-1)! + \sum_{i=1}^{n+1} \frac{(n+1)!}{i} \Bigr) = \frac{(n+1)!}{2}\Bigl(\frac{1}{n(n+1)} + H_{n+1}\Bigr),
 \]
which we can plug in to \eqref{eq:expF} and simplify to get the desired result in the case when $n$ is even.

The case where $n$ is odd is similar, with the difference coming from the fact that  
\[\sum_{k \text{ odd}} c(n+1,k) q^k = \sum_{k=1}^{n+1} \tfrac{1}{2}( c(n+1,k)+s(n+1,k)) q^k .\]
Differentiating both sides of this equation, setting $q=1$, plugging the result into the analogous version of \eqref{eq:expF} and simplifying gives $\EE(F) = H_{n-1}+\frac{2}{n+1}$ in this case.
\end{proof}

\section{Multistars}
\label{sect:multistars}

As mentioned in the introduction, multistars with $n$ edges are in bijective correspondence with partitions of $n$. If $n=n_1+\cdots + n_k$, we denote the partition as $\lambda = (n_1,\dots,n_k)$ and write $\lambda \vdash n$. It is also customary to write $\lambda = n_1^{i_1}\dots n_r^{i_r}$ if $\lambda$ has $n_t$ repeated $i_t$ times, where we are allowed to leave out the repetitions of 1 (when $n$ is clear from the context). For example, $\lambda = (5,4,4,4,2,2,1,1,1) = 5\,4^3\,2^2\,1^3 = 5\,4^3\,2^2$.

We denote by $C_\lambda$ the set of all permutations of type $\lambda$.  Recall that the \emph{type} of a permutation in $S_n$ is the partition of $n$ corresponding to the cycle lengths of the permutation when written as the product of disjoint cycles.

Let $\lambda$ be a partition of $n$ with $k$ parts $\lambda_1,\dots,\lambda_k$. We consider the \emph{multistar of type $\lambda$}: the multistar with $k$ outer vertices and $\lambda_i$ edges from the central vertex to the $i^{th}$ outer vertex.  Call this $K_\lambda(n)$.  
Let $r(\lambda)$ denote the number of parts of size 1 in $\lambda$, and note that a vertex of degree 1 in a multistar does not affect the number of faces. 

The generating function for the number of cycles in products of permutations in certain conjugacy classes was found by Stanley \cite{St11} using symmetric functions. It can be expressed using the \emph{shift operator} $E$ that is defined by the rule $E(f(q)) = f(q-1)$.  For example $E((q)_t) = (q-1)_t$.

\begin{theorem}[\cite{St11}]\label{thm:St11}
    Let $f_{\lambda}(j)$ denote the number of permutations in $C_{\lambda}$, whose product with the full cycle $(1\,2\cdots n)$ is a permutation with $j$ cycles.  Then:
\[ 
   \sum_{j=1}^n f_{\lambda}(j) q^j = 
   \frac{|C_\lambda|}{(n+1)!} \left(\,\prod_{i=1}^k (1-E^{\lambda_i})\right) (q+n)_{n+1}. 
\]
\end{theorem}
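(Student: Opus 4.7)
My plan is to evaluate the left-hand side via the character theory of the symmetric group, exploit the special structure at an $n$-cycle, and finally repackage the answer as the stated shift-operator expression. Writing $q^{\kappa(\pi)} = p_{\mathrm{type}(\pi)}(1^q)$ and expanding power sums in the Schur basis via the Frobenius formula $p_\mu = \sum_\nu \chi^\nu(\mu)s_\nu$, the scalar action of the class sum $\sum_{\sigma\in C_\lambda}\sigma$ on the irreducible representation indexed by $\nu$ yields
\[
\sum_j f_\lambda(j)q^j \;=\; |C_\lambda|\sum_\nu \frac{\chi^\nu(\lambda)\,\chi^\nu(\tau)}{\chi^\nu(1)}\,s_\nu(1^q),
\]
where $\tau = (1\,2\cdots n)$.

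Because $\tau$ is an $n$-cycle, the Murnaghan--Nakayama rule forces $\chi^\nu(\tau) = 0$ unless $\nu$ is a hook $(n-r,1^r)$, in which case $\chi^\nu(\tau) = (-1)^r$ and $\chi^\nu(1) = \binom{n-1}{r}$. A short application of the hook content formula gives the clean identity $s_{(n-r,1^r)}(1^q)/\binom{n-1}{r} = (q+n-r-1)_n/n!$, so the sum collapses to
\[
\sum_j f_\lambda(j)q^j \;=\; \frac{|C_\lambda|}{n!}\sum_{r=0}^{n-1}(-1)^r\chi^{(n-r,1^r)}(\lambda)\,(q+n-r-1)_n.
\]

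Next I would recognise the shift $(q+n-r-1)_n = E^r(q+n-1)_n$ and feed in the hook-character generating function
\[
\sum_{r=0}^{n-1}(-1)^r\chi^{(n-r,1^r)}(\lambda)\,t^r \;=\; \frac{\prod_{i=1}^k(1-t^{\lambda_i})}{1-t},
\]
which I would prove by induction on $k$: Murnaghan--Nakayama applied to a hook admits only two moves (remove an arm strip of size $\lambda_k$ with sign $+1$, or a leg strip of size $\lambda_k$ with sign $(-1)^{\lambda_k-1}$), giving the recurrence $g_\lambda(t) = (1-(-t)^{\lambda_k})\,g_{\lambda\setminus\lambda_k}(t)$. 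Replacing $t$ by $E$ yields
\[
\sum_j f_\lambda(j)q^j \;=\; \frac{|C_\lambda|}{n!}\cdot\frac{\prod_i(1-E^{\lambda_i})}{1-E}\,(q+n-1)_n,
\]
and the elementary identity $(1-E)(q+n)_{n+1} = (n+1)(q+n-1)_n$ matches this with $\tfrac{|C_\lambda|}{(n+1)!}\prod_i(1-E^{\lambda_i})(q+n)_{n+1}$, which is the claimed formula.

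The step I expect to be most delicate is the simultaneous ``division by $1-E$'' and ``multiplication by $1-E$'': one has to note that $\prod_i(1-E^{\lambda_i})/(1-E)$ is a genuine polynomial in $E$ (since each factor $1-E^{\lambda_i}$ is divisible by $1-E$), and then justify replacing $(q+n-1)_n$ by $(q+n)_{n+1}/(n+1)$ under the remaining operator. The cleanest justification is to multiply both candidate polynomials by $(n+1)(1-E)$, verify they then coincide, and observe that both have no constant term (so equality after $(1-E)$ is enough). Everything else amounts to routine symmetric-function bookkeeping and careful sign tracking in Murnaghan--Nakayama.
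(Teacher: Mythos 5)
Your proposal is correct and complete; note that the paper itself offers no proof of this statement --- it is quoted verbatim from Stanley's work --- so there is nothing internal to compare against, and what you have written is a genuine, self-contained derivation along the classical character-theoretic lines (the route of Zagier/Jackson rather than Stanley's symmetric-function argument). Every step checks out: the class-sum/Frobenius identity $\sum_j f_\lambda(j)q^j=|C_\lambda|\sum_\nu \chi^\nu(\lambda)\chi^\nu(\tau)\,s_\nu(1^q)/\chi^\nu(1)$ is the standard connection-coefficient computation; the restriction to hooks with $\chi^{(n-r,1^r)}(\tau)=(-1)^r$ and $\chi^{(n-r,1^r)}(1)=\binom{n-1}{r}$ is right; the specialization $s_{(n-r,1^r)}(1^q)/\binom{n-1}{r}=(q+n-r-1)_n/n!$ follows from contents $-r,\dots,n-r-1$ and hook product $n\,(n-r-1)!\,r!$; and the identity $(1-E)(q+n)_{n+1}=(n+1)(q+n-1)_n$ closes the argument (one can check the whole formula against the paper's Theorem~\ref{thm:dipoledistro} in the case $\lambda=(n)$, and it matches). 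Two cosmetic points. First, there is a small bookkeeping mismatch in the hook-character lemma: the recurrence $g_\lambda(t)=(1-(-t)^{\lambda_k})g_{\lambda\setminus\lambda_k}(t)$ is the one satisfied by $\sum_r\chi^{(n-r,1^r)}(\lambda)t^r$ \emph{without} the factor $(-1)^r$, whereas your displayed identity has $(-1)^r$ built in (for which the clean recurrence is $(1-t^{\lambda_k})$); the two versions agree after $t\mapsto -t$, so this is only a notational wrinkle, but it is exactly the sign-tracking you flagged. Second, the worry about ``dividing by $1-E$'' is lighter than you make it: $\prod_i(1-E^{\lambda_i})/(1-E)$ is a polynomial $Q(E)$ in the integral domain $\mathbb{Q}[E]$, and $Q(E)\cdot(n+1)(q+n-1)_n=Q(E)(1-E)(q+n)_{n+1}=\prod_i(1-E^{\lambda_i})(q+n)_{n+1}$ directly, with no need to argue about constant terms or injectivity. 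Finally, you implicitly use that $q^{\kappa(\pi)}=p_{\mathrm{type}(\pi)}(1^q)$ only for positive integers $q$ and that two polynomials agreeing there agree identically; worth a sentence in a written-up version, but not a gap.
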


We use this result to derive our main result of this section.

\begin{theorem} \label{thm:asymmulti}
    Let $F_\lambda(n)$ be the random variable denoting the number of faces in a random embedding of $K_{\lambda}(n)$ and let $n'=n-r(\lambda)$. Then
    \[ \mathbb{E}(F_\lambda(n)) \in \left( \Delta_{n'} - \tfrac{1}{n'+1} ,\, \Delta_{n'} + \tfrac{1}{n'+1}\right),\]
    where $\Delta_{n'} = H_{n'-1} + \left\lceil \tfrac{n'}{2} \right\rceil^{-1}$.
\end{theorem}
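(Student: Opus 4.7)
The plan is to apply Theorem~\ref{thm:St11} to derive an exact closed form for $\EE[F_\lambda(n)]$ and then compare it with the dipole formula of Corollary~\ref{cor:dipoleE}. First I would reduce to the case $r(\lambda)=0$: since a pendant edge (leaf) does not change the number of faces of any embedding and the forgetful map from rotation systems of $K_\lambda(n)$ to those of $K_\mu(n')$ (where $\mu$ is $\lambda$ with its parts of size~$1$ deleted) is a uniform covering, we have $\EE[F_\lambda(n)] = \EE[F_\mu(n')]$. Equivalently, the identity $(1-E)(q+n)_{n+1} = (n+1)(q+n-1)_n$ in Stanley's formula absorbs each factor $(1-E^{1})$ and decrements $n$. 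Henceforth I assume $\lambda_i\ge 2$ for every $i$, so $n=n'$.

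Next I would set $G(q)=(q+n)_{n+1}=\prod_{j=0}^{n}(q+j)$ and use $\EE[F_\lambda(n)] = P_\lambda'(1)/P_\lambda(1)$ for $P_\lambda(q)=\sum_j f_\lambda(j)q^j$. Since the shift operator commutes with $d/dq$, Theorem~\ref{thm:St11} yields $\EE[F_\lambda(n)] = (n+1)!^{-1}\sum_{S\subseteq[k]}(-1)^{|S|}G'(1-m_S)$ with $m_S=\sum_{i\in S}\lambda_i$. The polynomial $G$ has simple zeros at $0,-1,\dots,-n$, so $G(1-m_S)=0$ for every nonempty $S$, and the product rule gives $G'(1)=(n+1)!\,H_{n+1}$ together with $G'(1-m)=(-1)^{m-1}(m-1)!\,(n-m+1)!$ for $m\in\{1,\dots,n+1\}$. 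Substituting produces the clean identity
\[
\EE[F_\lambda(n)] \;=\; H_{n+1} \;-\; \frac{1}{n+1}\sum_{\emptyset\ne S\subseteq[k]}\frac{(-1)^{|S|+m_S}}{\binom{n}{m_S-1}}.
\]
Evaluated at the dipole partition $\lambda=(n)$, this gives $\Delta_n = H_{n+1} + (-1)^n/(n(n+1))$, matching Corollary~\ref{cor:dipoleE}. Denoting the displayed sum by $A_\lambda$, we obtain $\EE[F_\lambda(n)] - \Delta_n = (A_{(n)} - A_\lambda)/(n+1)$, so the theorem reduces to showing the strict inequality $|A_\lambda - A_{(n)}| < 1$.

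For this bound I would apply the Beta integral identity $1/\binom{n}{m-1}=(n+1)\int_0^1 t^{m-1}(1-t)^{n+1-m}\,dt$ to each term and then re-sum geometrically over $S$, obtaining
\[
A_\lambda - A_{(n)} \;=\; (n+1)\int_0^1\frac{1-t}{t}\Bigl[\,\prod_{i=1}^{k} P_i(t) \;-\; (1-t)^{n} \;-\; (-1)^{n+1}t^{n}\,\Bigr]\,dt,
\]
where $P_i(t)=(1-t)^{\lambda_i}-(-t)^{\lambda_i}$. Each $P_i$ is a polynomial of degree $\lambda_i-1$ satisfying $|P_i(t)|\le(1-t)^{\lambda_i}+t^{\lambda_i}\le 1$ on $[0,1]$; and because every $\lambda_i\ge 2$, a Taylor expansion at $0$ shows $\prod_i P_i(t) - (1-t)^{n} = O(t^{2})$, so the integrand is uniformly bounded on $[0,1]$. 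A careful sign/cancellation analysis, isolating the dominant contributions at $m_S\in\{2,n-2\}$ (where $\binom{n}{m_S-1}=n$) and pairing subsets $S\leftrightarrow[k]\setminus S$ to exploit symmetry, then gives $|A_\lambda - A_{(n)}|<1$.

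The hard part will be this final bound: a naive term-by-term estimate yields only $|A_\lambda - A_{(n)}|\le 2^{k}/n$, which is useless when $\lambda$ has many small parts (for instance $\lambda = 2^{n/2}$ with $k=n/2$). One must therefore exploit the global cancellation packaged by the integral form, where the growth of the integrand is controlled by the uniform bound $|P_i|\le 1$ rather than by the number of parts~$k$.
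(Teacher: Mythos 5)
Your set-up is sound and, up to notation, identical to the paper's: the reduction to $r(\lambda)=0$, the application of Theorem~\ref{thm:St11}, the evaluation $G'(1-m)=(-1)^{m-1}(m-1)!\,(n-m+1)!$ at the shifted arguments, and the resulting identity
$\EE[F_\lambda(n)]-\Delta_n=(A_{(n)}-A_\lambda)/(n+1)$ all check out (the paper writes the same sum over subpartitions $\mu\preceq\lambda$ instead of subsets $S$, and the Beta-integral rewriting of $1/\binom{n}{m_S-1}$ is a correct identity). The genuine gap is exactly at the step you yourself label ``the hard part'': the inequality $|A_\lambda-A_{(n)}|<1$ is asserted, not proved. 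The only estimates you actually establish --- $|P_i(t)|\le 1$ and boundedness of the integrand on $[0,1]$ --- are insufficient, because the integral carries a prefactor of $n+1$; a uniform bound on the integrand gives only $|A_\lambda-A_{(n)}|\le C(n+1)$. What is needed is that the integral itself is $O(1/n)$ \emph{with a constant beating $1$}, and this is quantitatively tight: for $\lambda=2^{n/2}$ the $n/2$ singleton subsets (all with $m_S=2$, all with the same sign) already contribute $1/2$ in absolute value, and the remaining weights contribute close to another $1/2$, so there is essentially no slack for a crude estimate. ``A careful sign/cancellation analysis, pairing $S\leftrightarrow[k]\setminus S$'' is a plan, not an argument, and nothing in the write-up quantifies the cancellation.

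For comparison, the paper closes this step with no cancellation at all: it bounds the sum of absolute values $\sum_{i=2}^{n-2}p_i/\binom{n}{i-1}$, where $p_i$ is the number of subpartitions of $\lambda$ of weight $i$. The key combinatorial input, which your proposal has no analogue of, is that refining each part of $\lambda$ into parts of size $2$ and at most one part of size $3$ can only increase every $p_i$, whence $p_i\le\binom{\lfloor n/2\rfloor}{\lfloor i/2\rfloor}$; then $i\in\{2,3\}$ contributes at most $\tfrac12$, each $4\le i\le n/2$ contributes at most $\tfrac{1}{2(n-1)}$, and the range $i>n/2$ is handled via the complementation symmetry $p_i=p_{n-i}$, for a total strictly below $1$. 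If you wish to keep the integral formulation, you would need an estimate of comparable precision, e.g.\ showing
\[
(n+1)\int_0^1\frac{1-t}{t}\,\Bigl|\,\prod_{i}P_i(t)-(1-t)^n-(-1)^{n+1}t^n\,\Bigr|\,dt<1
\]
by expanding $\prod_i\bigl(1-(-t/(1-t))^{\lambda_i}\bigr)$ on $[0,\tfrac12]$ and using the reflection $t\mapsto1-t$; this does not follow from the properties of the $P_i$ that you list, so as written the proof is incomplete at its crux.
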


\begin{proof}[Proof of Theorem~\ref{thm:asymmulti}]
 We may assume $\lambda(n)$ has no parts of size 1, as otherwise, we can remove any vertices of degree one from the multistar without affecting the number of faces. Having this assumption, we can use $n$ instead of $n'$. 
 We shall write $\mu\preceq \lambda$ for \emph{subpartitions} $\mu$ of $\lambda$, meaning that $\mu$ can be obtained from $\lambda$ by omitting some of the terms $\lambda_i$. We also use the notation $l(\lambda) = k$ for the number of parts of the partition and denote by $|\lambda|=\lambda_1+\cdots+\lambda_k$ the \emph{weight} of $\lambda$. This notation extends to the subpartitions of $\lambda$.

 Using the notation from Theorem~\ref{thm:St11}, we need to estimate 
 $\EE{F_\lambda(n)} = \frac{1}{|C_\lambda|} \sum_{j=1}^n j f_\lambda(j) $. We will proceed in a similar manner to the proof of
 Corollary~\ref{cor:dipoleE}, but using Theorem~\ref{thm:St11} instead of equation~(\ref{eq:evenstirling}). 
 Let us first note that 
  \begin{align*}
      \frac{(n+1)!}{|C_\lambda|} \sum_{j=1}^n f_\lambda(j) q^j &= 
      \left(\,\prod_{i=1}^k (1-E^{\lambda_i})\right) (q+n)_{n+1} \\
      &= \sum_{\mu \preceq \lambda} (-1)^{l(\mu)} E^{\vert \mu \vert} (q+n)_{n+1} \\
      &= \Bigl( (q+n)_{n+1} + (-1)^{l(\lambda)} (q)_{n+1} \Bigr)+ \sum_{\mu \precsim \lambda} (-1)^{l(\mu)} E^{\vert \mu \vert} (q+n)_{n+1} \\
      &= \sum_{k=1}^{n+1} \left( c(n+1,k) + (-1)^{l(\lambda)}s(n+1,k)\right)q^k+ \sum_{\mu \precsim \lambda} (-1)^{l(\mu)} E^{\vert \mu \vert} (q+n)_{n+1}
  \end{align*}
 where $\mu \precsim \lambda$ means that $\mu$ is a nonempty, nontrivial subpartition of $\lambda$. The first sum on the last line above is the same as in \eqref{eq:evenstirling} (or the corresponding odd result), so differentiating and plugging in $q=1$ gives the expected number of faces in a random embedding of the $n$-dipole. 
 Hence, the expected number of faces in a random embedding of $K_\lambda(n)$ is:
 \begin{align}
 \begin{split}  
     \Delta_n + \frac{1}{(n+1)!} \sum_{\mu \precsim \lambda} (-1)^{l(\mu)} \sum_{j \geq 1} j [q^j]  (q + n - \vert \mu \vert)_{n+1} .
 \end{split}
     \label{Bigidea}
 \end{align}
To complete the proof, it suffices to see that the absolute value of the last term in (\ref{Bigidea}) is less than~$\tfrac{1}{2}$.
To prove this, we take the derivatives of the falling factorial terms and evaluate them at $q=1$ %
to obtain the following expression:
 \begin{align*}
    \sum_{j \geq 1} j\, [q^j]  (q+n - \vert \mu \vert)_{n +1} 
     &= \Bigl[\frac{d}{dq}(q+n - \vert \mu \vert)_{n +1}\Bigr]_{q=1} \\
     &= (n - \vert \mu \vert + 1)! (\vert \mu \vert - 1)! (-1)^{\vert \mu \vert - 1}.
 \end{align*}
 
Putting this into the last term in (\ref{Bigidea}), and denoting by $\theta$ its absolute value, we obtain:
\begin{align*}
  \theta &= \biggl| \frac{1}{(n+1)!} \sum_{\mu \precsim \lambda} (-1)^{l(\mu)} (n - \vert \mu \vert + 1)! (\vert \mu \vert - 1)! (-1)^{\vert \mu \vert - 1}\biggr| \\
  &\le \frac{1}{n+1} \sum_{\mu \preceq \lambda} \frac{1}{\binom{n}{\vert \mu \vert - 1}} \\
  &= \frac{1}{n+1} \sum_{i=2}^{n-2} \frac{p_i}{\binom{n}{i - 1}}
 \end{align*}
where $p_i$ is the number of subpartitions of $\lambda$ whose sum is $i$.

To bound the terms $p_i\binom{n}{i - 1}^{-1}$,
we split each $\lambda_i \in \lambda$ into parts of size $2$ and possibly one part of size $3$ (if $\lambda_i$ is odd), then combine these new parts to obtain a finer partition $\lambda'$. 
Observe that the number of subpartitions of $\lambda'$ whose weight is $i$ is greater than the number of weight $i$ subpartitions of $\lambda$. Since $p_2+p_3\le n/2$ for $\lambda'$, this implies that
\begin{equation}
  \frac{p_2}{\binom{n}{1}} +  \frac{p_3}{\binom{n}{2}} \leq \frac{1}{2}.
\label{eq:bound for i=2,3}    
\end{equation}
For the case when $4\le i \le \frac{n}{2}$, we obtain a bound in the following way.
Each $\mu \precsim \lambda$ with $|\mu| = i$ corresponds to a $\mu' \precsim \lambda'$ with $|\mu'|=i$ and, using several parts of size 0 if necessary, $l(\mu') = \lfloor \frac{i}{2} \rfloor$. This implies that
\[ p_i \leq \binom{\lfloor n/2\rfloor}{\lfloor i/2\rfloor}.\]%
Thus, if $i$ is even and between 4 and $\frac{n}{2}$, we have
\begin{align*}
    \frac{p_i}{\binom{n}{i-1}} &\leq \frac{n(n-2) \cdots (n-i+2)}{n(n-1)\cdots(n-i+2)} \frac{(i-1)(i-2)\cdots 2}{i(i-2)\cdots 2} \\
    &=  \frac{1}{i} \frac{ i-1}{n-1} \frac{i-3}{n-3} \cdots \frac{3}{n-i+3}\\
    &\leq \frac{1}{2(n-1)}.
\end{align*}

A similar argument then gives, for $i$ odd and between 5 and $\frac{n}{2}$, the bound \[p_i\binom{n}{i-1}^{-1} \leq \frac{1}{(n-1)} \frac{ i-2}{n-3} \frac{i-4}{n-5} \cdots \frac{3}{n-i+2} \leq \frac{1}{2(n-1)}.\]
    
As the complement of any subpartition of $\lambda$ with sum $i$ is a subpartition with sum $n-i$, we have $p_i = p_{n-i}$. This implies that $p_{n-2}\binom{n}{n-3}^{-1} \leq \frac{3}{(n-1)(n-2)}$ and $p_{n-3}\binom{n}{n-4}^{-1} \leq \frac{8}{(n-1)(n-2)(n-3)}$, where both of these terms are less than $\frac{1}{2(n-1)}$ for $n \geq 8$. For $i \geq 4$, we can use the fact that $\binom{n}{i-1} \leq \binom{n}{i+1}$ for $i \leq \frac{n}{2}$ to conclude that:
\[ 
  \frac{p_{n-i}}{\binom{n}{n-i-1}} \leq \frac{p_i}{\binom{n}{i+1}} \leq \frac{1}{2(n-1)}. 
\]

Putting this all together gives:
\[\theta \leq \frac{1}{n+1} \sum_{i=2}^{n-2} \frac{p_i}{\binom{n}{i - 1}}
\leq \frac{1}{n+1} \left( \frac{1}{2} + (n-5)\frac{1}{2(n-1)} \right)
< \frac{1}{n+1}. \qedhere
\]

\end{proof}

Let us also observe that the multistar result in particular applies to dipoles and monopoles. 
The dipoles correspond to $K_\lambda(n)$ with $\lambda = n^1$.
Recall that the value $\Delta_{n'} = H_{n'-1} + \left\lceil \tfrac{n'}{2} \right\rceil^{-1}$ in Theorem~\ref{thm:asymmulti} is precisely the same as the expected number of faces for the dipole with $n'$ edges in Corollary~\ref{cor:dipoleE}.
The multistar $K_\lambda(2n)$ with $\lambda = 2^n$ is isomorphic to 
the monopole with $n$ edges, each of them subdivided.  
Therefore, $K_{2^n}(2n)$ and the monopole with $n$ edges have the same
distribution of the number of faces.
This special case of Theorem \ref{thm:asymmulti} can be compared with a result of Stahl \cite[Theorem 2.5]{St90}, who proved that the difference between $H_{2n}$ and the expected number of faces in a random embedding of the monopole with $n$ loops approaches zero as $n$ grows arbitrarily large.

\section{General graphs}
\label{sect:adding a vertex}

We will use the results of the previous sections to obtain upper bounds on the expected number of faces for random embeddings of more general classes of graphs. As an embedding of a graph on an orientable surface is uniquely determined by the local rotations at the vertices, we formally represent an embedding of a graph $G$ as a set $\Pi = \{\pi_v \mid v \in V(G)\}$, where $\pi_v$ is a cyclic permutation of the darts incident to $v$. Moreover, we may draw such an embedding in the plane by  connecting the corresponding darts in the correct order. The edges may, of course, cross each other in the drawing.  We can then trace out the faces in an embedding by following along one side of a path of edges. This process is shown in Figure~\ref{k4}: The embedding of $K_4$ represented in the figure has two faces, traced out in blue and red dashed lines.  For our purpose, it will be easier to draw the embedding in a different way: we will draw a circle for each face in the embedding then put the vertices around the perimeter of this circle in the order they appear on the facial walk. Of course, any vertex of degree $d$ appears altogether $d$ times and can appear in the same facial walk multiple times. Formally, the circles in this drawing correspond to the cycles in the permutation $F(\Pi)$, defined as follows: given the fixed-point free involution $\sigma_G = \prod_{uv \in E(G)} \left( uv, vu\right)$, let $F(\Pi) = \sigma_G \cdot \prod_{v \in V(G)} \pi_v$.

\tikzset{every picture/.style={line width=0.75pt}} %

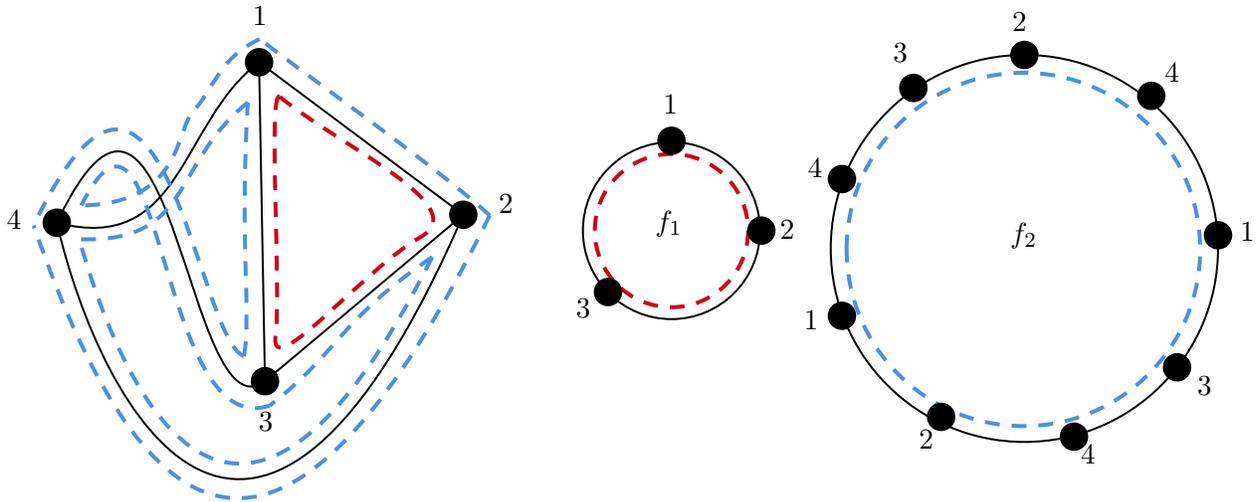
\begin{figure}[ht]
\centering

\tikzset{every picture/.style={line width=0.75pt}} %

\begin{tikzpicture}[x=0.75pt,y=0.75pt,yscale=-1,xscale=1]

\draw    (38.67,128.67) .. controls (97,6) and (103.67,238.67) .. (143.67,208.67) ;
\draw    (140.67,47.67) -- (243.67,124.67) ;
\draw    (140.67,47.67) -- (143.67,208.67) ;
\draw    (143.67,208.67) -- (243.67,124.67) ;
\draw    (38.67,128.67) .. controls (102.33,145.33) and (100.67,77.67) .. (140.67,47.67) ;
\draw    (38.67,128.67) .. controls (61,225) and (139,366) .. (243.67,124.67) ;
\draw [color={rgb, 255:red, 208; green, 2; blue, 27 }  ,draw opacity=1 ][line width=1.5]  [dash pattern={on 5.63pt off 4.5pt}]  (151,65) .. controls (145,63) and (152,162) .. (149,186) .. controls (146,210) and (193.96,151.78) .. (222,136) .. controls (250.04,120.22) and (181,90) .. (151,65) -- cycle ;
\draw [color={rgb, 255:red, 74; green, 144; blue, 226 }  ,draw opacity=1 ][line width=1.5]  [dash pattern={on 5.63pt off 4.5pt}]  (228,146) .. controls (132,338) and (74,222) .. (50,137) ;
\draw [color={rgb, 255:red, 74; green, 144; blue, 226 }  ,draw opacity=1 ][line width=1.5]  [dash pattern={on 5.63pt off 4.5pt}]  (135,68) .. controls (96,99) and (108,139) .. (50,137) ;
\draw [color={rgb, 255:red, 74; green, 144; blue, 226 }  ,draw opacity=1 ][line width=1.5]  [dash pattern={on 5.63pt off 4.5pt}]  (133,196) .. controls (137,176) and (130.67,140.67) .. (135,68) ;
\draw [color={rgb, 255:red, 74; green, 144; blue, 226 }  ,draw opacity=1 ][line width=1.5]  [dash pattern={on 5.63pt off 4.5pt}]  (146,221) .. controls (89,240) and (92,38) .. (51,120) ;
\draw [color={rgb, 255:red, 74; green, 144; blue, 226 }  ,draw opacity=1 ][line width=1.5]  [dash pattern={on 5.63pt off 4.5pt}]  (257,125) .. controls (162,318) and (89,311) .. (27,130) ;
\draw [color={rgb, 255:red, 74; green, 144; blue, 226 }  ,draw opacity=1 ][line width=1.5]  [dash pattern={on 5.63pt off 4.5pt}]  (257,125) .. controls (232,102) and (175,62) .. (141,36) ;
\draw [color={rgb, 255:red, 74; green, 144; blue, 226 }  ,draw opacity=1 ][line width=1.5]  [dash pattern={on 5.63pt off 4.5pt}]  (141,36) .. controls (119.87,45.69) and (117,63) .. (107.9,71.64) .. controls (98.8,80.29) and (102.77,119.13) .. (51,120) ;
\draw [color={rgb, 255:red, 74; green, 144; blue, 226 }  ,draw opacity=1 ][line width=1.5]  [dash pattern={on 5.63pt off 4.5pt}]  (133,196) .. controls (109,187) and (92,-6) .. (27,130) ;
\draw [color={rgb, 255:red, 74; green, 144; blue, 226 }  ,draw opacity=1 ][line width=1.5]  [dash pattern={on 5.63pt off 4.5pt}]  (228,146) .. controls (181,178) and (181,194) .. (146,221) ;
\draw   (304,132.67) .. controls (304,108) and (324,88) .. (348.67,88) .. controls (373.34,88) and (393.33,108) .. (393.33,132.67) .. controls (393.33,157.34) and (373.34,177.33) .. (348.67,177.33) .. controls (324,177.33) and (304,157.34) .. (304,132.67) -- cycle ;
\draw   (429,141.67) .. controls (429,87.73) and (472.73,44) .. (526.67,44) .. controls (580.61,44) and (624.33,87.73) .. (624.33,141.67) .. controls (624.33,195.61) and (580.61,239.33) .. (526.67,239.33) .. controls (472.73,239.33) and (429,195.61) .. (429,141.67) -- cycle ;
\draw  [color={rgb, 255:red, 208; green, 2; blue, 27 }  ,draw opacity=1 ][dash pattern={on 5.63pt off 4.5pt}][line width=1.5]  (310,132.67) .. controls (310,111.31) and (327.31,94) .. (348.67,94) .. controls (370.02,94) and (387.33,111.31) .. (387.33,132.67) .. controls (387.33,154.02) and (370.02,171.33) .. (348.67,171.33) .. controls (327.31,171.33) and (310,154.02) .. (310,132.67) -- cycle ;
\draw  [color={rgb, 255:red, 74; green, 144; blue, 226 }  ,draw opacity=1 ][dash pattern={on 5.63pt off 4.5pt}][line width=1.5]  (437,142.17) .. controls (437,92.92) and (476.92,53) .. (526.17,53) .. controls (575.41,53) and (615.33,92.92) .. (615.33,142.17) .. controls (615.33,191.41) and (575.41,231.33) .. (526.17,231.33) .. controls (476.92,231.33) and (437,191.41) .. (437,142.17) -- cycle ;
\draw  [fill={rgb, 255:red, 0; green, 0; blue, 0 }  ,fill opacity=1 ] (342,87.33) .. controls (342,83.65) and (344.98,80.67) .. (348.67,80.67) .. controls (352.35,80.67) and (355.33,83.65) .. (355.33,87.33) .. controls (355.33,91.02) and (352.35,94) .. (348.67,94) .. controls (344.98,94) and (342,91.02) .. (342,87.33) -- cycle ;
\draw  [fill={rgb, 255:red, 0; green, 0; blue, 0 }  ,fill opacity=1 ] (584,64.67) .. controls (584,60.98) and (586.98,58) .. (590.67,58) .. controls (594.35,58) and (597.33,60.98) .. (597.33,64.67) .. controls (597.33,68.35) and (594.35,71.33) .. (590.67,71.33) .. controls (586.98,71.33) and (584,68.35) .. (584,64.67) -- cycle ;
\draw  [fill={rgb, 255:red, 0; green, 0; blue, 0 }  ,fill opacity=1 ] (310,163.67) .. controls (310,159.98) and (312.98,157) .. (316.67,157) .. controls (320.35,157) and (323.33,159.98) .. (323.33,163.67) .. controls (323.33,167.35) and (320.35,170.33) .. (316.67,170.33) .. controls (312.98,170.33) and (310,167.35) .. (310,163.67) -- cycle ;
\draw  [fill={rgb, 255:red, 0; green, 0; blue, 0 }  ,fill opacity=1 ] (387.33,132.67) .. controls (387.33,128.98) and (390.32,126) .. (394,126) .. controls (397.68,126) and (400.67,128.98) .. (400.67,132.67) .. controls (400.67,136.35) and (397.68,139.33) .. (394,139.33) .. controls (390.32,139.33) and (387.33,136.35) .. (387.33,132.67) -- cycle ;
\draw  [fill={rgb, 255:red, 0; green, 0; blue, 0 }  ,fill opacity=1 ] (545,236.67) .. controls (545,232.98) and (547.98,230) .. (551.67,230) .. controls (555.35,230) and (558.33,232.98) .. (558.33,236.67) .. controls (558.33,240.35) and (555.35,243.33) .. (551.67,243.33) .. controls (547.98,243.33) and (545,240.35) .. (545,236.67) -- cycle ;
\draw  [fill={rgb, 255:red, 0; green, 0; blue, 0 }  ,fill opacity=1 ] (428,106.67) .. controls (428,102.98) and (430.98,100) .. (434.67,100) .. controls (438.35,100) and (441.33,102.98) .. (441.33,106.67) .. controls (441.33,110.35) and (438.35,113.33) .. (434.67,113.33) .. controls (430.98,113.33) and (428,110.35) .. (428,106.67) -- cycle ;
\draw  [fill={rgb, 255:red, 0; green, 0; blue, 0 }  ,fill opacity=1 ] (617.67,135) .. controls (617.67,131.32) and (620.65,128.33) .. (624.33,128.33) .. controls (628.02,128.33) and (631,131.32) .. (631,135) .. controls (631,138.68) and (628.02,141.67) .. (624.33,141.67) .. controls (620.65,141.67) and (617.67,138.68) .. (617.67,135) -- cycle ;
\draw  [fill={rgb, 255:red, 0; green, 0; blue, 0 }  ,fill opacity=1 ] (597,201.67) .. controls (597,197.98) and (599.98,195) .. (603.67,195) .. controls (607.35,195) and (610.33,197.98) .. (610.33,201.67) .. controls (610.33,205.35) and (607.35,208.33) .. (603.67,208.33) .. controls (599.98,208.33) and (597,205.35) .. (597,201.67) -- cycle ;
\draw  [fill={rgb, 255:red, 0; green, 0; blue, 0 }  ,fill opacity=1 ] (478,226.67) .. controls (478,222.98) and (480.98,220) .. (484.67,220) .. controls (488.35,220) and (491.33,222.98) .. (491.33,226.67) .. controls (491.33,230.35) and (488.35,233.33) .. (484.67,233.33) .. controls (480.98,233.33) and (478,230.35) .. (478,226.67) -- cycle ;
\draw  [fill={rgb, 255:red, 0; green, 0; blue, 0 }  ,fill opacity=1 ] (428,175.67) .. controls (428,171.98) and (430.98,169) .. (434.67,169) .. controls (438.35,169) and (441.33,171.98) .. (441.33,175.67) .. controls (441.33,179.35) and (438.35,182.33) .. (434.67,182.33) .. controls (430.98,182.33) and (428,179.35) .. (428,175.67) -- cycle ;
\draw  [fill={rgb, 255:red, 0; green, 0; blue, 0 }  ,fill opacity=1 ] (464,60.67) .. controls (464,56.98) and (466.98,54) .. (470.67,54) .. controls (474.35,54) and (477.33,56.98) .. (477.33,60.67) .. controls (477.33,64.35) and (474.35,67.33) .. (470.67,67.33) .. controls (466.98,67.33) and (464,64.35) .. (464,60.67) -- cycle ;
\draw  [fill={rgb, 255:red, 0; green, 0; blue, 0 }  ,fill opacity=1 ] (520,44) .. controls (520,40.32) and (522.98,37.33) .. (526.67,37.33) .. controls (530.35,37.33) and (533.33,40.32) .. (533.33,44) .. controls (533.33,47.68) and (530.35,50.67) .. (526.67,50.67) .. controls (522.98,50.67) and (520,47.68) .. (520,44) -- cycle ;
\draw  [fill={rgb, 255:red, 0; green, 0; blue, 0 }  ,fill opacity=1 ] (32,128.67) .. controls (32,124.98) and (34.98,122) .. (38.67,122) .. controls (42.35,122) and (45.33,124.98) .. (45.33,128.67) .. controls (45.33,132.35) and (42.35,135.33) .. (38.67,135.33) .. controls (34.98,135.33) and (32,132.35) .. (32,128.67) -- cycle ;
\draw  [fill={rgb, 255:red, 0; green, 0; blue, 0 }  ,fill opacity=1 ] (137,208.67) .. controls (137,204.98) and (139.98,202) .. (143.67,202) .. controls (147.35,202) and (150.33,204.98) .. (150.33,208.67) .. controls (150.33,212.35) and (147.35,215.33) .. (143.67,215.33) .. controls (139.98,215.33) and (137,212.35) .. (137,208.67) -- cycle ;
\draw  [fill={rgb, 255:red, 0; green, 0; blue, 0 }  ,fill opacity=1 ] (237,124.67) .. controls (237,120.98) and (239.98,118) .. (243.67,118) .. controls (247.35,118) and (250.33,120.98) .. (250.33,124.67) .. controls (250.33,128.35) and (247.35,131.33) .. (243.67,131.33) .. controls (239.98,131.33) and (237,128.35) .. (237,124.67) -- cycle ;
\draw  [fill={rgb, 255:red, 0; green, 0; blue, 0 }  ,fill opacity=1 ] (134,47.67) .. controls (134,43.98) and (136.98,41) .. (140.67,41) .. controls (144.35,41) and (147.33,43.98) .. (147.33,47.67) .. controls (147.33,51.35) and (144.35,54.33) .. (140.67,54.33) .. controls (136.98,54.33) and (134,51.35) .. (134,47.67) -- cycle ;

\draw (136,18) node [anchor=north west][inner sep=0.75pt]   [align=left] {1};
\draw (260,113) node [anchor=north west][inner sep=0.75pt]   [align=left] {2};
\draw (139,223) node [anchor=north west][inner sep=0.75pt]   [align=left] {3};
\draw (12,121) node [anchor=north west][inner sep=0.75pt]   [align=left] {4};
\draw (339,121) node [anchor=north west][inner sep=0.75pt]   [align=left] {$\displaystyle f_{1}$};
\draw (518,127) node [anchor=north west][inner sep=0.75pt]   [align=left] {$\displaystyle f_{2}$};
\draw (414,172) node [anchor=north west][inner sep=0.75pt]   [align=left] {1};
\draw (634,127) node [anchor=north west][inner sep=0.75pt]   [align=left] {1};
\draw (343,63) node [anchor=north west][inner sep=0.75pt]   [align=left] {1};
\draw (472,232) node [anchor=north west][inner sep=0.75pt]   [align=left] {2};
\draw (519,21) node [anchor=north west][inner sep=0.75pt]   [align=left] {2};
\draw (402,126) node [anchor=north west][inner sep=0.75pt]   [align=left] {2};
\draw (612.33,204.67) node [anchor=north west][inner sep=0.75pt]   [align=left] {3};
\draw (459,37) node [anchor=north west][inner sep=0.75pt]   [align=left] {3};
\draw (299,166) node [anchor=north west][inner sep=0.75pt]   [align=left] {3};
\draw (416,95) node [anchor=north west][inner sep=0.75pt]   [align=left] {4};
\draw (553.67,239.67) node [anchor=north west][inner sep=0.75pt]   [align=left] {4};
\draw (596,47) node [anchor=north west][inner sep=0.75pt]   [align=left] {4};

\end{tikzpicture}

\vspace*{-20mm}
 \caption{An embedding of $K_4$ with two faces, represented in two different ways: by using local rotations and by exposing the facial walks.}
  \label{k4}  
\end{figure}

Now suppose we have a fixed embedding of some graph $G$, and we want to add a new vertex to this graph. The new vertex $v$ is connected to some vertices of $G$. If $u$ is a neighbor of $v$, we fix one of the appearances of $u$ on the facial walks of $G$. This is where the edge $uv$ will emanate from $u$ in the local clockwise order around $u$. Then, choosing a random local rotation at $v$ gives an embedding of $G' = G + v$. This process can be expressed formally using the language of permutations.  %

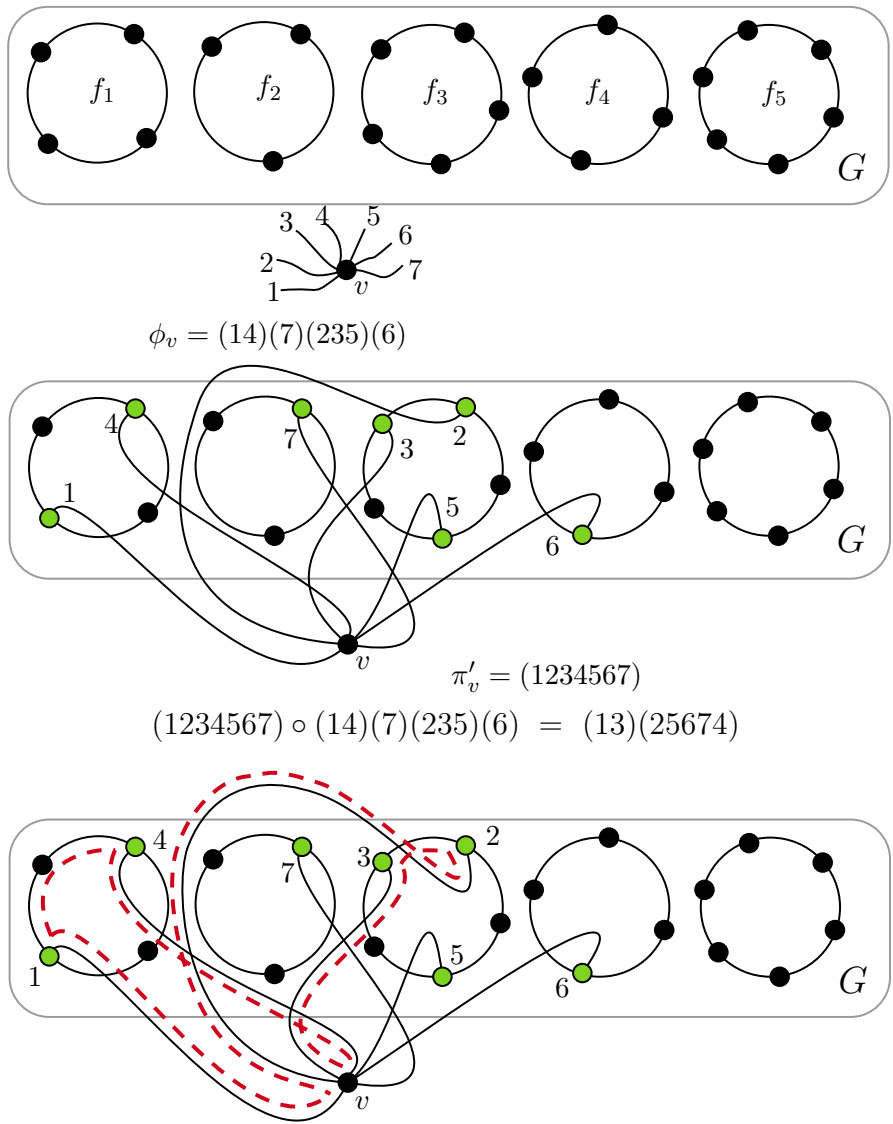
\begin{figure}
\centering

\tikzset{every picture/.style={line width=0.75pt}} %

\begin{tikzpicture}[x=0.75pt,y=0.75pt,yscale=-0.7,xscale=0.7]

\draw   (30,70.17) .. controls (30,42.46) and (52.46,20) .. (80.17,20) .. controls (107.87,20) and (130.33,42.46) .. (130.33,70.17) .. controls (130.33,97.87) and (107.87,120.33) .. (80.17,120.33) .. controls (52.46,120.33) and (30,97.87) .. (30,70.17) -- cycle ;
\draw   (150,69.17) .. controls (150,41.46) and (172.46,19) .. (200.17,19) .. controls (227.87,19) and (250.33,41.46) .. (250.33,69.17) .. controls (250.33,96.87) and (227.87,119.33) .. (200.17,119.33) .. controls (172.46,119.33) and (150,96.87) .. (150,69.17) -- cycle ;
\draw   (271,71.17) .. controls (271,43.46) and (293.46,21) .. (321.17,21) .. controls (348.87,21) and (371.33,43.46) .. (371.33,71.17) .. controls (371.33,98.87) and (348.87,121.33) .. (321.17,121.33) .. controls (293.46,121.33) and (271,98.87) .. (271,71.17) -- cycle ;
\draw   (391,71.17) .. controls (391,43.46) and (413.46,21) .. (441.17,21) .. controls (468.87,21) and (491.33,43.46) .. (491.33,71.17) .. controls (491.33,98.87) and (468.87,121.33) .. (441.17,121.33) .. controls (413.46,121.33) and (391,98.87) .. (391,71.17) -- cycle ;
\draw  [fill={rgb, 255:red, 0; green, 0; blue, 0 }  ,fill opacity=1 ] (253,197.67) .. controls (253,193.98) and (255.98,191) .. (259.67,191) .. controls (263.35,191) and (266.33,193.98) .. (266.33,197.67) .. controls (266.33,201.35) and (263.35,204.33) .. (259.67,204.33) .. controls (255.98,204.33) and (253,201.35) .. (253,197.67) -- cycle ;
\draw  [fill={rgb, 255:red, 0; green, 0; blue, 0 }  ,fill opacity=1 ] (100,27.67) .. controls (100,23.98) and (102.98,21) .. (106.67,21) .. controls (110.35,21) and (113.33,23.98) .. (113.33,27.67) .. controls (113.33,31.35) and (110.35,34.33) .. (106.67,34.33) .. controls (102.98,34.33) and (100,31.35) .. (100,27.67) -- cycle ;
\draw  [fill={rgb, 255:red, 0; green, 0; blue, 0 }  ,fill opacity=1 ] (109,102.67) .. controls (109,98.98) and (111.98,96) .. (115.67,96) .. controls (119.35,96) and (122.33,98.98) .. (122.33,102.67) .. controls (122.33,106.35) and (119.35,109.33) .. (115.67,109.33) .. controls (111.98,109.33) and (109,106.35) .. (109,102.67) -- cycle ;
\draw  [fill={rgb, 255:red, 0; green, 0; blue, 0 }  ,fill opacity=1 ] (33,40.67) .. controls (33,36.98) and (35.98,34) .. (39.67,34) .. controls (43.35,34) and (46.33,36.98) .. (46.33,40.67) .. controls (46.33,44.35) and (43.35,47.33) .. (39.67,47.33) .. controls (35.98,47.33) and (33,44.35) .. (33,40.67) -- cycle ;
\draw  [fill={rgb, 255:red, 0; green, 0; blue, 0 }  ,fill opacity=1 ] (220,27.67) .. controls (220,23.98) and (222.98,21) .. (226.67,21) .. controls (230.35,21) and (233.33,23.98) .. (233.33,27.67) .. controls (233.33,31.35) and (230.35,34.33) .. (226.67,34.33) .. controls (222.98,34.33) and (220,31.35) .. (220,27.67) -- cycle ;
\draw  [fill={rgb, 255:red, 0; green, 0; blue, 0 }  ,fill opacity=1 ] (156,36.67) .. controls (156,32.98) and (158.98,30) .. (162.67,30) .. controls (166.35,30) and (169.33,32.98) .. (169.33,36.67) .. controls (169.33,40.35) and (166.35,43.33) .. (162.67,43.33) .. controls (158.98,43.33) and (156,40.35) .. (156,36.67) -- cycle ;
\draw  [fill={rgb, 255:red, 0; green, 0; blue, 0 }  ,fill opacity=1 ] (200.17,119.33) .. controls (200.17,115.65) and (203.15,112.67) .. (206.83,112.67) .. controls (210.52,112.67) and (213.5,115.65) .. (213.5,119.33) .. controls (213.5,123.02) and (210.52,126) .. (206.83,126) .. controls (203.15,126) and (200.17,123.02) .. (200.17,119.33) -- cycle ;
\draw  [fill={rgb, 255:red, 0; green, 0; blue, 0 }  ,fill opacity=1 ] (363,82.67) .. controls (363,78.98) and (365.98,76) .. (369.67,76) .. controls (373.35,76) and (376.33,78.98) .. (376.33,82.67) .. controls (376.33,86.35) and (373.35,89.33) .. (369.67,89.33) .. controls (365.98,89.33) and (363,86.35) .. (363,82.67) -- cycle ;
\draw  [fill={rgb, 255:red, 0; green, 0; blue, 0 }  ,fill opacity=1 ] (338,26.67) .. controls (338,22.98) and (340.98,20) .. (344.67,20) .. controls (348.35,20) and (351.33,22.98) .. (351.33,26.67) .. controls (351.33,30.35) and (348.35,33.33) .. (344.67,33.33) .. controls (340.98,33.33) and (338,30.35) .. (338,26.67) -- cycle ;
\draw  [fill={rgb, 255:red, 0; green, 0; blue, 0 }  ,fill opacity=1 ] (278,38.67) .. controls (278,34.98) and (280.98,32) .. (284.67,32) .. controls (288.35,32) and (291.33,34.98) .. (291.33,38.67) .. controls (291.33,42.35) and (288.35,45.33) .. (284.67,45.33) .. controls (280.98,45.33) and (278,42.35) .. (278,38.67) -- cycle ;
\draw  [fill={rgb, 255:red, 0; green, 0; blue, 0 }  ,fill opacity=1 ] (272,99.67) .. controls (272,95.98) and (274.98,93) .. (278.67,93) .. controls (282.35,93) and (285.33,95.98) .. (285.33,99.67) .. controls (285.33,103.35) and (282.35,106.33) .. (278.67,106.33) .. controls (274.98,106.33) and (272,103.35) .. (272,99.67) -- cycle ;
\draw  [fill={rgb, 255:red, 0; green, 0; blue, 0 }  ,fill opacity=1 ] (321.17,121.33) .. controls (321.17,117.65) and (324.15,114.67) .. (327.83,114.67) .. controls (331.52,114.67) and (334.5,117.65) .. (334.5,121.33) .. controls (334.5,125.02) and (331.52,128) .. (327.83,128) .. controls (324.15,128) and (321.17,125.02) .. (321.17,121.33) -- cycle ;
\draw  [fill={rgb, 255:red, 0; green, 0; blue, 0 }  ,fill opacity=1 ] (441.17,21) .. controls (441.17,17.32) and (444.15,14.33) .. (447.83,14.33) .. controls (451.52,14.33) and (454.5,17.32) .. (454.5,21) .. controls (454.5,24.68) and (451.52,27.67) .. (447.83,27.67) .. controls (444.15,27.67) and (441.17,24.68) .. (441.17,21) -- cycle ;
\draw  [fill={rgb, 255:red, 0; green, 0; blue, 0 }  ,fill opacity=1 ] (387,58.67) .. controls (387,54.98) and (389.98,52) .. (393.67,52) .. controls (397.35,52) and (400.33,54.98) .. (400.33,58.67) .. controls (400.33,62.35) and (397.35,65.33) .. (393.67,65.33) .. controls (389.98,65.33) and (387,62.35) .. (387,58.67) -- cycle ;
\draw  [fill={rgb, 255:red, 0; green, 0; blue, 0 }  ,fill opacity=1 ] (422,118.67) .. controls (422,114.98) and (424.98,112) .. (428.67,112) .. controls (432.35,112) and (435.33,114.98) .. (435.33,118.67) .. controls (435.33,122.35) and (432.35,125.33) .. (428.67,125.33) .. controls (424.98,125.33) and (422,122.35) .. (422,118.67) -- cycle ;
\draw  [fill={rgb, 255:red, 0; green, 0; blue, 0 }  ,fill opacity=1 ] (481,87.67) .. controls (481,83.98) and (483.98,81) .. (487.67,81) .. controls (491.35,81) and (494.33,83.98) .. (494.33,87.67) .. controls (494.33,91.35) and (491.35,94.33) .. (487.67,94.33) .. controls (483.98,94.33) and (481,91.35) .. (481,87.67) -- cycle ;
\draw  [fill={rgb, 255:red, 0; green, 0; blue, 0 }  ,fill opacity=1 ] (38,106.67) .. controls (38,102.98) and (40.98,100) .. (44.67,100) .. controls (48.35,100) and (51.33,102.98) .. (51.33,106.67) .. controls (51.33,110.35) and (48.35,113.33) .. (44.67,113.33) .. controls (40.98,113.33) and (38,110.35) .. (38,106.67) -- cycle ;
\draw  [color={rgb, 255:red, 155; green, 155; blue, 155 }  ,draw opacity=1 ] (16,36.47) .. controls (16,20.74) and (28.74,8) .. (44.47,8) -- (621.87,8) .. controls (637.59,8) and (650.33,20.74) .. (650.33,36.47) -- (650.33,121.87) .. controls (650.33,137.59) and (637.59,150.33) .. (621.87,150.33) -- (44.47,150.33) .. controls (28.74,150.33) and (16,137.59) .. (16,121.87) -- cycle ;
\draw    (223.33,169.33) .. controls (234.33,176.33) and (247.33,201.33) .. (259.67,197.67) ;
\draw    (273,168) -- (259.67,197.67) ;
\draw    (244,164) .. controls (256,172) and (258.33,188.33) .. (253,197.67) ;
\draw    (300.33,194.33) .. controls (282.33,212.33) and (284.33,196.33) .. (259.67,197.67) ;
\draw    (212.33,212.33) .. controls (246.33,209.33) and (231.33,219.33) .. (259.67,197.67) ;
\draw    (209.33,190.33) .. controls (234.33,194.33) and (225.33,208.33) .. (259.67,197.67) ;
\draw    (259.67,197.67) .. controls (288.33,180.33) and (269.33,198.33) .. (292.33,178.33) ;
\draw   (31,340.17) .. controls (31,312.46) and (53.46,290) .. (81.17,290) .. controls (108.87,290) and (131.33,312.46) .. (131.33,340.17) .. controls (131.33,367.87) and (108.87,390.33) .. (81.17,390.33) .. controls (53.46,390.33) and (31,367.87) .. (31,340.17) -- cycle ;
\draw   (151,339.17) .. controls (151,311.46) and (173.46,289) .. (201.17,289) .. controls (228.87,289) and (251.33,311.46) .. (251.33,339.17) .. controls (251.33,366.87) and (228.87,389.33) .. (201.17,389.33) .. controls (173.46,389.33) and (151,366.87) .. (151,339.17) -- cycle ;
\draw   (272,341.17) .. controls (272,313.46) and (294.46,291) .. (322.17,291) .. controls (349.87,291) and (372.33,313.46) .. (372.33,341.17) .. controls (372.33,368.87) and (349.87,391.33) .. (322.17,391.33) .. controls (294.46,391.33) and (272,368.87) .. (272,341.17) -- cycle ;
\draw   (392,341.17) .. controls (392,313.46) and (414.46,291) .. (442.17,291) .. controls (469.87,291) and (492.33,313.46) .. (492.33,341.17) .. controls (492.33,368.87) and (469.87,391.33) .. (442.17,391.33) .. controls (414.46,391.33) and (392,368.87) .. (392,341.17) -- cycle ;
\draw  [fill={rgb, 255:red, 0; green, 0; blue, 0 }  ,fill opacity=1 ] (254,467.67) .. controls (254,463.98) and (256.98,461) .. (260.67,461) .. controls (264.35,461) and (267.33,463.98) .. (267.33,467.67) .. controls (267.33,471.35) and (264.35,474.33) .. (260.67,474.33) .. controls (256.98,474.33) and (254,471.35) .. (254,467.67) -- cycle ;
\draw  [fill={rgb, 255:red, 0; green, 0; blue, 0 }  ,fill opacity=1 ] (110,372.67) .. controls (110,368.98) and (112.98,366) .. (116.67,366) .. controls (120.35,366) and (123.33,368.98) .. (123.33,372.67) .. controls (123.33,376.35) and (120.35,379.33) .. (116.67,379.33) .. controls (112.98,379.33) and (110,376.35) .. (110,372.67) -- cycle ;
\draw  [fill={rgb, 255:red, 0; green, 0; blue, 0 }  ,fill opacity=1 ] (34,310.67) .. controls (34,306.98) and (36.98,304) .. (40.67,304) .. controls (44.35,304) and (47.33,306.98) .. (47.33,310.67) .. controls (47.33,314.35) and (44.35,317.33) .. (40.67,317.33) .. controls (36.98,317.33) and (34,314.35) .. (34,310.67) -- cycle ;
\draw  [fill={rgb, 255:red, 0; green, 0; blue, 0 }  ,fill opacity=1 ] (157,306.67) .. controls (157,302.98) and (159.98,300) .. (163.67,300) .. controls (167.35,300) and (170.33,302.98) .. (170.33,306.67) .. controls (170.33,310.35) and (167.35,313.33) .. (163.67,313.33) .. controls (159.98,313.33) and (157,310.35) .. (157,306.67) -- cycle ;
\draw  [fill={rgb, 255:red, 0; green, 0; blue, 0 }  ,fill opacity=1 ] (201.17,389.33) .. controls (201.17,385.65) and (204.15,382.67) .. (207.83,382.67) .. controls (211.52,382.67) and (214.5,385.65) .. (214.5,389.33) .. controls (214.5,393.02) and (211.52,396) .. (207.83,396) .. controls (204.15,396) and (201.17,393.02) .. (201.17,389.33) -- cycle ;
\draw  [fill={rgb, 255:red, 0; green, 0; blue, 0 }  ,fill opacity=1 ] (364,352.67) .. controls (364,348.98) and (366.98,346) .. (370.67,346) .. controls (374.35,346) and (377.33,348.98) .. (377.33,352.67) .. controls (377.33,356.35) and (374.35,359.33) .. (370.67,359.33) .. controls (366.98,359.33) and (364,356.35) .. (364,352.67) -- cycle ;
\draw  [fill={rgb, 255:red, 0; green, 0; blue, 0 }  ,fill opacity=1 ] (273,369.67) .. controls (273,365.98) and (275.98,363) .. (279.67,363) .. controls (283.35,363) and (286.33,365.98) .. (286.33,369.67) .. controls (286.33,373.35) and (283.35,376.33) .. (279.67,376.33) .. controls (275.98,376.33) and (273,373.35) .. (273,369.67) -- cycle ;
\draw  [fill={rgb, 255:red, 0; green, 0; blue, 0 }  ,fill opacity=1 ] (442.17,291) .. controls (442.17,287.32) and (445.15,284.33) .. (448.83,284.33) .. controls (452.52,284.33) and (455.5,287.32) .. (455.5,291) .. controls (455.5,294.68) and (452.52,297.67) .. (448.83,297.67) .. controls (445.15,297.67) and (442.17,294.68) .. (442.17,291) -- cycle ;
\draw  [fill={rgb, 255:red, 0; green, 0; blue, 0 }  ,fill opacity=1 ] (388,328.67) .. controls (388,324.98) and (390.98,322) .. (394.67,322) .. controls (398.35,322) and (401.33,324.98) .. (401.33,328.67) .. controls (401.33,332.35) and (398.35,335.33) .. (394.67,335.33) .. controls (390.98,335.33) and (388,332.35) .. (388,328.67) -- cycle ;
\draw  [fill={rgb, 255:red, 0; green, 0; blue, 0 }  ,fill opacity=1 ] (482,357.67) .. controls (482,353.98) and (484.98,351) .. (488.67,351) .. controls (492.35,351) and (495.33,353.98) .. (495.33,357.67) .. controls (495.33,361.35) and (492.35,364.33) .. (488.67,364.33) .. controls (484.98,364.33) and (482,361.35) .. (482,357.67) -- cycle ;
\draw  [color={rgb, 255:red, 155; green, 155; blue, 155 }  ,draw opacity=1 ] (17,306.47) .. controls (17,290.74) and (29.74,278) .. (45.47,278) -- (622.87,278) .. controls (638.59,278) and (651.33,290.74) .. (651.33,306.47) -- (651.33,391.87) .. controls (651.33,407.59) and (638.59,420.33) .. (622.87,420.33) -- (45.47,420.33) .. controls (29.74,420.33) and (17,407.59) .. (17,391.87) -- cycle ;
\draw    (285.67,308.67) .. controls (327.33,343.33) and (171.33,391.33) .. (260.67,467.67) ;
\draw    (107.67,297.67) .. controls (36.33,348.33) and (314.33,438.33) .. (254,467.67) ;
\draw    (227.67,297.67) .. controls (200.33,326.33) and (395.33,491.33) .. (260.67,467.67) ;
\draw    (45.67,376.67) .. controls (62.33,324.33) and (188.33,537.33) .. (260.67,467.67) ;
\draw    (260.67,467.67) .. controls (304.33,440.33) and (492.33,299.33) .. (429.67,388.67) ;
\draw  [fill={rgb, 255:red, 126; green, 211; blue, 33 }  ,fill opacity=1 ] (101,297.67) .. controls (101,293.98) and (103.98,291) .. (107.67,291) .. controls (111.35,291) and (114.33,293.98) .. (114.33,297.67) .. controls (114.33,301.35) and (111.35,304.33) .. (107.67,304.33) .. controls (103.98,304.33) and (101,301.35) .. (101,297.67) -- cycle ;
\draw  [fill={rgb, 255:red, 126; green, 211; blue, 33 }  ,fill opacity=1 ] (221,297.67) .. controls (221,293.98) and (223.98,291) .. (227.67,291) .. controls (231.35,291) and (234.33,293.98) .. (234.33,297.67) .. controls (234.33,301.35) and (231.35,304.33) .. (227.67,304.33) .. controls (223.98,304.33) and (221,301.35) .. (221,297.67) -- cycle ;
\draw  [fill={rgb, 255:red, 126; green, 211; blue, 33 }  ,fill opacity=1 ] (279,308.67) .. controls (279,304.98) and (281.98,302) .. (285.67,302) .. controls (289.35,302) and (292.33,304.98) .. (292.33,308.67) .. controls (292.33,312.35) and (289.35,315.33) .. (285.67,315.33) .. controls (281.98,315.33) and (279,312.35) .. (279,308.67) -- cycle ;
\draw  [fill={rgb, 255:red, 126; green, 211; blue, 33 }  ,fill opacity=1 ] (423,388.67) .. controls (423,384.98) and (425.98,382) .. (429.67,382) .. controls (433.35,382) and (436.33,384.98) .. (436.33,388.67) .. controls (436.33,392.35) and (433.35,395.33) .. (429.67,395.33) .. controls (425.98,395.33) and (423,392.35) .. (423,388.67) -- cycle ;
\draw  [fill={rgb, 255:red, 126; green, 211; blue, 33 }  ,fill opacity=1 ] (39,376.67) .. controls (39,372.98) and (41.98,370) .. (45.67,370) .. controls (49.35,370) and (52.33,372.98) .. (52.33,376.67) .. controls (52.33,380.35) and (49.35,383.33) .. (45.67,383.33) .. controls (41.98,383.33) and (39,380.35) .. (39,376.67) -- cycle ;
\draw    (260.67,467.67) .. controls (300.67,437.67) and (317.33,297.33) .. (328.83,391.33) ;
\draw   (31,656.17) .. controls (31,628.46) and (53.46,606) .. (81.17,606) .. controls (108.87,606) and (131.33,628.46) .. (131.33,656.17) .. controls (131.33,683.87) and (108.87,706.33) .. (81.17,706.33) .. controls (53.46,706.33) and (31,683.87) .. (31,656.17) -- cycle ;
\draw   (151,655.17) .. controls (151,627.46) and (173.46,605) .. (201.17,605) .. controls (228.87,605) and (251.33,627.46) .. (251.33,655.17) .. controls (251.33,682.87) and (228.87,705.33) .. (201.17,705.33) .. controls (173.46,705.33) and (151,682.87) .. (151,655.17) -- cycle ;
\draw   (272,657.17) .. controls (272,629.46) and (294.46,607) .. (322.17,607) .. controls (349.87,607) and (372.33,629.46) .. (372.33,657.17) .. controls (372.33,684.87) and (349.87,707.33) .. (322.17,707.33) .. controls (294.46,707.33) and (272,684.87) .. (272,657.17) -- cycle ;
\draw   (392,657.17) .. controls (392,629.46) and (414.46,607) .. (442.17,607) .. controls (469.87,607) and (492.33,629.46) .. (492.33,657.17) .. controls (492.33,684.87) and (469.87,707.33) .. (442.17,707.33) .. controls (414.46,707.33) and (392,684.87) .. (392,657.17) -- cycle ;
\draw  [fill={rgb, 255:red, 0; green, 0; blue, 0 }  ,fill opacity=1 ] (110,688.67) .. controls (110,684.98) and (112.98,682) .. (116.67,682) .. controls (120.35,682) and (123.33,684.98) .. (123.33,688.67) .. controls (123.33,692.35) and (120.35,695.33) .. (116.67,695.33) .. controls (112.98,695.33) and (110,692.35) .. (110,688.67) -- cycle ;
\draw  [fill={rgb, 255:red, 0; green, 0; blue, 0 }  ,fill opacity=1 ] (34,626.67) .. controls (34,622.98) and (36.98,620) .. (40.67,620) .. controls (44.35,620) and (47.33,622.98) .. (47.33,626.67) .. controls (47.33,630.35) and (44.35,633.33) .. (40.67,633.33) .. controls (36.98,633.33) and (34,630.35) .. (34,626.67) -- cycle ;
\draw  [fill={rgb, 255:red, 0; green, 0; blue, 0 }  ,fill opacity=1 ] (157,622.67) .. controls (157,618.98) and (159.98,616) .. (163.67,616) .. controls (167.35,616) and (170.33,618.98) .. (170.33,622.67) .. controls (170.33,626.35) and (167.35,629.33) .. (163.67,629.33) .. controls (159.98,629.33) and (157,626.35) .. (157,622.67) -- cycle ;
\draw  [fill={rgb, 255:red, 0; green, 0; blue, 0 }  ,fill opacity=1 ] (201.17,705.33) .. controls (201.17,701.65) and (204.15,698.67) .. (207.83,698.67) .. controls (211.52,698.67) and (214.5,701.65) .. (214.5,705.33) .. controls (214.5,709.02) and (211.52,712) .. (207.83,712) .. controls (204.15,712) and (201.17,709.02) .. (201.17,705.33) -- cycle ;
\draw  [fill={rgb, 255:red, 0; green, 0; blue, 0 }  ,fill opacity=1 ] (364,668.67) .. controls (364,664.98) and (366.98,662) .. (370.67,662) .. controls (374.35,662) and (377.33,664.98) .. (377.33,668.67) .. controls (377.33,672.35) and (374.35,675.33) .. (370.67,675.33) .. controls (366.98,675.33) and (364,672.35) .. (364,668.67) -- cycle ;
\draw  [fill={rgb, 255:red, 0; green, 0; blue, 0 }  ,fill opacity=1 ] (273,685.67) .. controls (273,681.98) and (275.98,679) .. (279.67,679) .. controls (283.35,679) and (286.33,681.98) .. (286.33,685.67) .. controls (286.33,689.35) and (283.35,692.33) .. (279.67,692.33) .. controls (275.98,692.33) and (273,689.35) .. (273,685.67) -- cycle ;
\draw  [fill={rgb, 255:red, 0; green, 0; blue, 0 }  ,fill opacity=1 ] (442.17,607) .. controls (442.17,603.32) and (445.15,600.33) .. (448.83,600.33) .. controls (452.52,600.33) and (455.5,603.32) .. (455.5,607) .. controls (455.5,610.68) and (452.52,613.67) .. (448.83,613.67) .. controls (445.15,613.67) and (442.17,610.68) .. (442.17,607) -- cycle ;
\draw  [fill={rgb, 255:red, 0; green, 0; blue, 0 }  ,fill opacity=1 ] (388,644.67) .. controls (388,640.98) and (390.98,638) .. (394.67,638) .. controls (398.35,638) and (401.33,640.98) .. (401.33,644.67) .. controls (401.33,648.35) and (398.35,651.33) .. (394.67,651.33) .. controls (390.98,651.33) and (388,648.35) .. (388,644.67) -- cycle ;
\draw  [fill={rgb, 255:red, 0; green, 0; blue, 0 }  ,fill opacity=1 ] (482,673.67) .. controls (482,669.98) and (484.98,667) .. (488.67,667) .. controls (492.35,667) and (495.33,669.98) .. (495.33,673.67) .. controls (495.33,677.35) and (492.35,680.33) .. (488.67,680.33) .. controls (484.98,680.33) and (482,677.35) .. (482,673.67) -- cycle ;
\draw  [color={rgb, 255:red, 155; green, 155; blue, 155 }  ,draw opacity=1 ] (17,622.47) .. controls (17,606.74) and (29.74,594) .. (45.47,594) -- (622.87,594) .. controls (638.59,594) and (651.33,606.74) .. (651.33,622.47) -- (651.33,707.87) .. controls (651.33,723.59) and (638.59,736.33) .. (622.87,736.33) -- (45.47,736.33) .. controls (29.74,736.33) and (17,723.59) .. (17,707.87) -- cycle ;
\draw    (285.67,624.67) .. controls (327.33,659.33) and (138,727) .. (260.67,783.67) ;
\draw    (227.67,613.67) .. controls (200.33,642.33) and (395.33,807.33) .. (260.67,783.67) ;
\draw    (45.67,692.67) .. controls (62.33,640.33) and (207,892) .. (260.67,783.67) ;
\draw    (345.67,612.67) .. controls (366,698) and (295.64,594.68) .. (238,575) .. controls (180.36,555.32) and (154.31,588.41) .. (145,615) .. controls (135.69,641.59) and (136.33,776.33) .. (260.67,783.67) ;
\draw    (260.67,783.67) .. controls (304.33,756.33) and (492.33,615.33) .. (429.67,704.67) ;
\draw  [fill={rgb, 255:red, 126; green, 211; blue, 33 }  ,fill opacity=1 ] (221,613.67) .. controls (221,609.98) and (223.98,607) .. (227.67,607) .. controls (231.35,607) and (234.33,609.98) .. (234.33,613.67) .. controls (234.33,617.35) and (231.35,620.33) .. (227.67,620.33) .. controls (223.98,620.33) and (221,617.35) .. (221,613.67) -- cycle ;
\draw  [fill={rgb, 255:red, 126; green, 211; blue, 33 }  ,fill opacity=1 ] (423,704.67) .. controls (423,700.98) and (425.98,698) .. (429.67,698) .. controls (433.35,698) and (436.33,700.98) .. (436.33,704.67) .. controls (436.33,708.35) and (433.35,711.33) .. (429.67,711.33) .. controls (425.98,711.33) and (423,708.35) .. (423,704.67) -- cycle ;
\draw    (260.67,783.67) .. controls (300.67,753.67) and (317.33,613.33) .. (328.83,707.33) ;
\draw [color={rgb, 255:red, 208; green, 2; blue, 27 }  ,draw opacity=1 ][line width=1.5]  [dash pattern={on 5.63pt off 4.5pt}]  (93,616) .. controls (59,690) and (303,762) .. (257,772) ;
\draw [color={rgb, 255:red, 208; green, 2; blue, 27 }  ,draw opacity=1 ][line width=1.5]  [dash pattern={on 5.63pt off 4.5pt}]  (257,772) .. controls (161,731) and (325,666) .. (299,626) ;
\draw [color={rgb, 255:red, 208; green, 2; blue, 27 }  ,draw opacity=1 ][line width=1.5]  [dash pattern={on 5.63pt off 4.5pt}]  (299,626) .. controls (305,616) and (312,615) .. (333,617) ;
\draw [color={rgb, 255:red, 208; green, 2; blue, 27 }  ,draw opacity=1 ][line width=1.5]  [dash pattern={on 5.63pt off 4.5pt}]  (333,617) .. controls (367,676) and (277,580) .. (241,568) .. controls (205,556) and (205,561) .. (187,563) .. controls (169,565) and (143.35,588.18) .. (136,621) .. controls (128.65,653.82) and (143,726) .. (174,756) .. controls (205,786) and (236,783) .. (248,792) ;
\draw  [fill={rgb, 255:red, 126; green, 211; blue, 33 }  ,fill opacity=1 ] (322.17,707.33) .. controls (322.17,703.65) and (325.15,700.67) .. (328.83,700.67) .. controls (332.52,700.67) and (335.5,703.65) .. (335.5,707.33) .. controls (335.5,711.02) and (332.52,714) .. (328.83,714) .. controls (325.15,714) and (322.17,711.02) .. (322.17,707.33) -- cycle ;
\draw [color={rgb, 255:red, 208; green, 2; blue, 27 }  ,draw opacity=1 ][line width=1.5]  [dash pattern={on 5.63pt off 4.5pt}]  (47,678) .. controls (23,624) and (77,614) .. (93,616) ;
\draw    (107.67,613.67) .. controls (72.51,638.64) and (124.16,680.47) .. (179,710) .. controls (233.84,739.53) and (295.08,758.48) .. (254,783.67) ;
\draw   (514,71.17) .. controls (514,43.46) and (536.46,21) .. (564.17,21) .. controls (591.87,21) and (614.33,43.46) .. (614.33,71.17) .. controls (614.33,98.87) and (591.87,121.33) .. (564.17,121.33) .. controls (536.46,121.33) and (514,98.87) .. (514,71.17) -- cycle ;
\draw  [fill={rgb, 255:red, 0; green, 0; blue, 0 }  ,fill opacity=1 ] (542.17,25) .. controls (542.17,21.32) and (545.15,18.33) .. (548.83,18.33) .. controls (552.52,18.33) and (555.5,21.32) .. (555.5,25) .. controls (555.5,28.68) and (552.52,31.67) .. (548.83,31.67) .. controls (545.15,31.67) and (542.17,28.68) .. (542.17,25) -- cycle ;
\draw  [fill={rgb, 255:red, 0; green, 0; blue, 0 }  ,fill opacity=1 ] (510,58.67) .. controls (510,54.98) and (512.98,52) .. (516.67,52) .. controls (520.35,52) and (523.33,54.98) .. (523.33,58.67) .. controls (523.33,62.35) and (520.35,65.33) .. (516.67,65.33) .. controls (512.98,65.33) and (510,62.35) .. (510,58.67) -- cycle ;
\draw  [fill={rgb, 255:red, 0; green, 0; blue, 0 }  ,fill opacity=1 ] (520,103.67) .. controls (520,99.98) and (522.98,97) .. (526.67,97) .. controls (530.35,97) and (533.33,99.98) .. (533.33,103.67) .. controls (533.33,107.35) and (530.35,110.33) .. (526.67,110.33) .. controls (522.98,110.33) and (520,107.35) .. (520,103.67) -- cycle ;
\draw  [fill={rgb, 255:red, 0; green, 0; blue, 0 }  ,fill opacity=1 ] (604,87.67) .. controls (604,83.98) and (606.98,81) .. (610.67,81) .. controls (614.35,81) and (617.33,83.98) .. (617.33,87.67) .. controls (617.33,91.35) and (614.35,94.33) .. (610.67,94.33) .. controls (606.98,94.33) and (604,91.35) .. (604,87.67) -- cycle ;
\draw  [fill={rgb, 255:red, 0; green, 0; blue, 0 }  ,fill opacity=1 ] (595.17,39) .. controls (595.17,35.32) and (598.15,32.33) .. (601.83,32.33) .. controls (605.52,32.33) and (608.5,35.32) .. (608.5,39) .. controls (608.5,42.68) and (605.52,45.67) .. (601.83,45.67) .. controls (598.15,45.67) and (595.17,42.68) .. (595.17,39) -- cycle ;
\draw  [fill={rgb, 255:red, 0; green, 0; blue, 0 }  ,fill opacity=1 ] (564.17,121.33) .. controls (564.17,117.65) and (567.15,114.67) .. (570.83,114.67) .. controls (574.52,114.67) and (577.5,117.65) .. (577.5,121.33) .. controls (577.5,125.02) and (574.52,128) .. (570.83,128) .. controls (567.15,128) and (564.17,125.02) .. (564.17,121.33) -- cycle ;
\draw   (514,339.17) .. controls (514,311.46) and (536.46,289) .. (564.17,289) .. controls (591.87,289) and (614.33,311.46) .. (614.33,339.17) .. controls (614.33,366.87) and (591.87,389.33) .. (564.17,389.33) .. controls (536.46,389.33) and (514,366.87) .. (514,339.17) -- cycle ;
\draw  [fill={rgb, 255:red, 0; green, 0; blue, 0 }  ,fill opacity=1 ] (542.17,293) .. controls (542.17,289.32) and (545.15,286.33) .. (548.83,286.33) .. controls (552.52,286.33) and (555.5,289.32) .. (555.5,293) .. controls (555.5,296.68) and (552.52,299.67) .. (548.83,299.67) .. controls (545.15,299.67) and (542.17,296.68) .. (542.17,293) -- cycle ;
\draw  [fill={rgb, 255:red, 0; green, 0; blue, 0 }  ,fill opacity=1 ] (510,326.67) .. controls (510,322.98) and (512.98,320) .. (516.67,320) .. controls (520.35,320) and (523.33,322.98) .. (523.33,326.67) .. controls (523.33,330.35) and (520.35,333.33) .. (516.67,333.33) .. controls (512.98,333.33) and (510,330.35) .. (510,326.67) -- cycle ;
\draw  [fill={rgb, 255:red, 0; green, 0; blue, 0 }  ,fill opacity=1 ] (520,371.67) .. controls (520,367.98) and (522.98,365) .. (526.67,365) .. controls (530.35,365) and (533.33,367.98) .. (533.33,371.67) .. controls (533.33,375.35) and (530.35,378.33) .. (526.67,378.33) .. controls (522.98,378.33) and (520,375.35) .. (520,371.67) -- cycle ;
\draw  [fill={rgb, 255:red, 0; green, 0; blue, 0 }  ,fill opacity=1 ] (604,355.67) .. controls (604,351.98) and (606.98,349) .. (610.67,349) .. controls (614.35,349) and (617.33,351.98) .. (617.33,355.67) .. controls (617.33,359.35) and (614.35,362.33) .. (610.67,362.33) .. controls (606.98,362.33) and (604,359.35) .. (604,355.67) -- cycle ;
\draw  [fill={rgb, 255:red, 0; green, 0; blue, 0 }  ,fill opacity=1 ] (595.17,307) .. controls (595.17,303.32) and (598.15,300.33) .. (601.83,300.33) .. controls (605.52,300.33) and (608.5,303.32) .. (608.5,307) .. controls (608.5,310.68) and (605.52,313.67) .. (601.83,313.67) .. controls (598.15,313.67) and (595.17,310.68) .. (595.17,307) -- cycle ;
\draw  [fill={rgb, 255:red, 0; green, 0; blue, 0 }  ,fill opacity=1 ] (564.17,389.33) .. controls (564.17,385.65) and (567.15,382.67) .. (570.83,382.67) .. controls (574.52,382.67) and (577.5,385.65) .. (577.5,389.33) .. controls (577.5,393.02) and (574.52,396) .. (570.83,396) .. controls (567.15,396) and (564.17,393.02) .. (564.17,389.33) -- cycle ;
\draw   (515,657.17) .. controls (515,629.46) and (537.46,607) .. (565.17,607) .. controls (592.87,607) and (615.33,629.46) .. (615.33,657.17) .. controls (615.33,684.87) and (592.87,707.33) .. (565.17,707.33) .. controls (537.46,707.33) and (515,684.87) .. (515,657.17) -- cycle ;
\draw  [fill={rgb, 255:red, 0; green, 0; blue, 0 }  ,fill opacity=1 ] (543.17,611) .. controls (543.17,607.32) and (546.15,604.33) .. (549.83,604.33) .. controls (553.52,604.33) and (556.5,607.32) .. (556.5,611) .. controls (556.5,614.68) and (553.52,617.67) .. (549.83,617.67) .. controls (546.15,617.67) and (543.17,614.68) .. (543.17,611) -- cycle ;
\draw  [fill={rgb, 255:red, 0; green, 0; blue, 0 }  ,fill opacity=1 ] (511,644.67) .. controls (511,640.98) and (513.98,638) .. (517.67,638) .. controls (521.35,638) and (524.33,640.98) .. (524.33,644.67) .. controls (524.33,648.35) and (521.35,651.33) .. (517.67,651.33) .. controls (513.98,651.33) and (511,648.35) .. (511,644.67) -- cycle ;
\draw  [fill={rgb, 255:red, 0; green, 0; blue, 0 }  ,fill opacity=1 ] (521,689.67) .. controls (521,685.98) and (523.98,683) .. (527.67,683) .. controls (531.35,683) and (534.33,685.98) .. (534.33,689.67) .. controls (534.33,693.35) and (531.35,696.33) .. (527.67,696.33) .. controls (523.98,696.33) and (521,693.35) .. (521,689.67) -- cycle ;
\draw  [fill={rgb, 255:red, 0; green, 0; blue, 0 }  ,fill opacity=1 ] (605,673.67) .. controls (605,669.98) and (607.98,667) .. (611.67,667) .. controls (615.35,667) and (618.33,669.98) .. (618.33,673.67) .. controls (618.33,677.35) and (615.35,680.33) .. (611.67,680.33) .. controls (607.98,680.33) and (605,677.35) .. (605,673.67) -- cycle ;
\draw  [fill={rgb, 255:red, 0; green, 0; blue, 0 }  ,fill opacity=1 ] (596.17,625) .. controls (596.17,621.32) and (599.15,618.33) .. (602.83,618.33) .. controls (606.52,618.33) and (609.5,621.32) .. (609.5,625) .. controls (609.5,628.68) and (606.52,631.67) .. (602.83,631.67) .. controls (599.15,631.67) and (596.17,628.68) .. (596.17,625) -- cycle ;
\draw  [fill={rgb, 255:red, 0; green, 0; blue, 0 }  ,fill opacity=1 ] (565.17,707.33) .. controls (565.17,703.65) and (568.15,700.67) .. (571.83,700.67) .. controls (575.52,700.67) and (578.5,703.65) .. (578.5,707.33) .. controls (578.5,711.02) and (575.52,714) .. (571.83,714) .. controls (568.15,714) and (565.17,711.02) .. (565.17,707.33) -- cycle ;
\draw  [fill={rgb, 255:red, 126; green, 211; blue, 33 }  ,fill opacity=1 ] (322.17,391.33) .. controls (322.17,387.65) and (325.15,384.67) .. (328.83,384.67) .. controls (332.52,384.67) and (335.5,387.65) .. (335.5,391.33) .. controls (335.5,395.02) and (332.52,398) .. (328.83,398) .. controls (325.15,398) and (322.17,395.02) .. (322.17,391.33) -- cycle ;
\draw  [fill={rgb, 255:red, 126; green, 211; blue, 33 }  ,fill opacity=1 ] (101,613.67) .. controls (101,609.98) and (103.98,607) .. (107.67,607) .. controls (111.35,607) and (114.33,609.98) .. (114.33,613.67) .. controls (114.33,617.35) and (111.35,620.33) .. (107.67,620.33) .. controls (103.98,620.33) and (101,617.35) .. (101,613.67) -- cycle ;
\draw [color={rgb, 255:red, 208; green, 2; blue, 27 }  ,draw opacity=1 ][line width=1.5]  [dash pattern={on 5.63pt off 4.5pt}]  (47,678) .. controls (75,646) and (185,845) .. (248,792) ;
\draw  [fill={rgb, 255:red, 0; green, 0; blue, 0 }  ,fill opacity=1 ] (254,783.67) .. controls (254,779.98) and (256.98,777) .. (260.67,777) .. controls (264.35,777) and (267.33,779.98) .. (267.33,783.67) .. controls (267.33,787.35) and (264.35,790.33) .. (260.67,790.33) .. controls (256.98,790.33) and (254,787.35) .. (254,783.67) -- cycle ;
\draw  [fill={rgb, 255:red, 126; green, 211; blue, 33 }  ,fill opacity=1 ] (279,624.67) .. controls (279,620.98) and (281.98,618) .. (285.67,618) .. controls (289.35,618) and (292.33,620.98) .. (292.33,624.67) .. controls (292.33,628.35) and (289.35,631.33) .. (285.67,631.33) .. controls (281.98,631.33) and (279,628.35) .. (279,624.67) -- cycle ;
\draw  [fill={rgb, 255:red, 126; green, 211; blue, 33 }  ,fill opacity=1 ] (339,612.67) .. controls (339,608.98) and (341.98,606) .. (345.67,606) .. controls (349.35,606) and (352.33,608.98) .. (352.33,612.67) .. controls (352.33,616.35) and (349.35,619.33) .. (345.67,619.33) .. controls (341.98,619.33) and (339,616.35) .. (339,612.67) -- cycle ;
\draw  [fill={rgb, 255:red, 126; green, 211; blue, 33 }  ,fill opacity=1 ] (39,692.67) .. controls (39,688.98) and (41.98,686) .. (45.67,686) .. controls (49.35,686) and (52.33,688.98) .. (52.33,692.67) .. controls (52.33,696.35) and (49.35,699.33) .. (45.67,699.33) .. controls (41.98,699.33) and (39,696.35) .. (39,692.67) -- cycle ;
\draw    (345.67,296.67) .. controls (320.33,345.33) and (182,212.33) .. (153,294) .. controls (124,375.67) and (136.33,460.33) .. (260.67,467.67) ;
\draw  [fill={rgb, 255:red, 126; green, 211; blue, 33 }  ,fill opacity=1 ] (339,296.67) .. controls (339,292.98) and (341.98,290) .. (345.67,290) .. controls (349.35,290) and (352.33,292.98) .. (352.33,296.67) .. controls (352.33,300.35) and (349.35,303.33) .. (345.67,303.33) .. controls (341.98,303.33) and (339,300.35) .. (339,296.67) -- cycle ;

\draw (72,57) node [anchor=north west][inner sep=0.75pt]   [align=left] {$\displaystyle f_{1}$};
\draw (429,58) node [anchor=north west][inner sep=0.75pt]   [align=left] {$\displaystyle f_{4}$};
\draw (312,58) node [anchor=north west][inner sep=0.75pt]   [align=left] {$\displaystyle f_{3}$};
\draw (192,55) node [anchor=north west][inner sep=0.75pt]   [align=left] {$\displaystyle f_{2}$};
\draw (263,204) node [anchor=north west][inner sep=0.75pt]   [align=left] {$\displaystyle v$};
\draw (611,112) node [anchor=north west][inner sep=0.75pt]  [font=\Large] [align=left] {$\displaystyle G$};
\draw (556,59) node [anchor=north west][inner sep=0.75pt]   [align=left] {$\displaystyle f_{5}$};
\draw (264,474) node [anchor=north west][inner sep=0.75pt]   [align=left] {$\displaystyle v$};
\draw (200,205) node [anchor=north west][inner sep=0.75pt]   [align=left] {1};
\draw (195,184) node [anchor=north west][inner sep=0.75pt]   [align=left] {2};
\draw (209,154) node [anchor=north west][inner sep=0.75pt]   [align=left] {3};
\draw (235,150) node [anchor=north west][inner sep=0.75pt]   [align=left] {4};
\draw (272,150) node [anchor=north west][inner sep=0.75pt]   [align=left] {5};
\draw (295,164) node [anchor=north west][inner sep=0.75pt]   [align=left] {6};
\draw (302,189) node [anchor=north west][inner sep=0.75pt]   [align=left] {7};
\draw (53,350) node [anchor=north west][inner sep=0.75pt]   [align=left] {1};
\draw (334,309) node [anchor=north west][inner sep=0.75pt]   [align=left] {2};
\draw (296,318) node [anchor=north west][inner sep=0.75pt]   [align=left] {3};
\draw (83,300) node [anchor=north west][inner sep=0.75pt]   [align=left] {4};
\draw (329,360) node [anchor=north west][inner sep=0.75pt]   [align=left] {5};
\draw (401,388) node [anchor=north west][inner sep=0.75pt]   [align=left] {6};
\draw (212,311) node [anchor=north west][inner sep=0.75pt]   [align=left] {7};
\draw (117,513.33) node [anchor=north west][inner sep=0.75pt]  [font=\large] [align=left] {$\displaystyle ( 1234567) \circ ( 14)( 7)( 235)( 6) \ =\ ( 13)( 25674)$};
\draw (333,477.33) node [anchor=north west][inner sep=0.75pt]   [align=left] {$\displaystyle \pi _{v} '=( 1234567)$};
\draw (115,232.33) node [anchor=north west][inner sep=0.75pt]   [align=left] {$\displaystyle \phi _{v} =( 14)( 7)( 235)( 6)$};
\draw (263.67,791.33) node [anchor=north west][inner sep=0.75pt]   [align=left] {$\displaystyle v$};
\draw (28,699) node [anchor=north west][inner sep=0.75pt]   [align=left] {1};
\draw (358,597) node [anchor=north west][inner sep=0.75pt]   [align=left] {2};
\draw (265,612) node [anchor=north west][inner sep=0.75pt]   [align=left] {3};
\draw (118,600) node [anchor=north west][inner sep=0.75pt]   [align=left] {4};
\draw (332,683) node [anchor=north west][inner sep=0.75pt]   [align=left] {5};
\draw (408,707) node [anchor=north west][inner sep=0.75pt]   [align=left] {6};
\draw (210,624) node [anchor=north west][inner sep=0.75pt]   [align=left] {7};
\draw (611,380) node [anchor=north west][inner sep=0.75pt]  [font=\Large] [align=left] {$\displaystyle G$};
\draw (612,698) node [anchor=north west][inner sep=0.75pt]  [font=\Large] [align=left] {$\displaystyle G$};

\end{tikzpicture}

\vspace*{-10mm}
 \caption{An example of the process of adding a vertex $v$ to an embedding of a graph $G$.} 
  \label{fig:newvert}
\end{figure}

\begin{definition}\label{def:addvx}
Let $G'$ be a graph and $v \in V(G')$. Let $G=G'-v$ and consider an embedding $\Pi=\{ \pi_u \mid u \in V(G)\}$ of $G$. Now, consider all embeddings $\Pi'$ of $G'$ obtained by the following random process:
\begin{enumerate}
\item For each vertex $u$ adjacent to $v$, insert the dart $uv$ into the local rotation $\pi_u$ uniformly at random into one of the gaps between consecutive darts in $\pi_u$ to obtain a cyclic permutation $\pi'_u$.
\item Secondly, fix a uniformly random cyclic permutation $\pi'_v$ of the darts incident to $v$.
\end{enumerate}
We say that the random embedding $\Pi'=\{ \pi'_u \mid u \in V(G')\}$ resulting from this process from $\Pi$ was obtained by \emph{randomly adding $v$ to $\Pi$}. 
\end{definition}

For our purpose, the crucial observation to be made regarding the process outlined in Definition~\ref{def:addvx} is that it can be used to build a random embedding of a graph one vertex at a time.

\begin{lemma}
Given a graph $G'$ and a vertex $v \in V(G')$, if $\Pi$ is a uniformly random embedding of $G = G'-v$, then randomly adding $v$ to $\Pi$ gives a uniformly random embedding $\Pi'$ of $G'$.
\end{lemma}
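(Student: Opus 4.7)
The plan is a direct probability calculation. I will introduce the natural restriction map from embeddings of $G'$ to embeddings of $G = G'-v$, show that the randomized procedure of Definition~\ref{def:addvx} samples uniformly within each fiber, and observe that all fibers have the same cardinality.

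First I would define the map $\rho$ sending $\Pi' = \{\pi'_u\}_{u \in V(G')}$ to the embedding $\rho(\Pi') = \{\pi_u\}_{u \in V(G)}$ of $G$ obtained by discarding $\pi'_v$ and, for each neighbor $u$ of $v$, deleting all darts associated with edges $uv$ from $\pi'_u$ to obtain $\pi_u$. Let $N_\Pi := |\rho^{-1}(\Pi)|$. A preimage of $\Pi$ is specified by choosing a cyclic rotation $\pi'_v$ of the $\deg(v)$ darts at $v$ together with, for each neighbor $u$ of $v$, a cyclic extension of $\pi_u$ that inserts the darts $uv$. The number of such choices depends only on $\deg(v)$ and on the degrees $\deg_G(u)$ for neighbors $u$ of $v$, not on the actual rotations making up $\Pi$. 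Hence $N_\Pi = N$ is a constant and $n_{G'} = N \cdot n_G$, where $n_G$ and $n_{G'}$ denote the total number of embeddings of $G$ and $G'$.

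Next I would verify that the procedure in Definition~\ref{def:addvx}, starting from a fixed $\Pi$, samples uniformly from $\rho^{-1}(\Pi)$. Step~2 picks $\pi'_v$ uniformly from the $(\deg(v)-1)!$ rotations by construction. For step~1, when a single dart $uv$ is inserted uniformly into one of the $\deg_G(u)$ gaps of $\pi_u$ (or in the unique trivial way if $\deg_G(u)=0$), each cyclic extension of $\pi_u$ is reached with probability $1/\deg_G(u)$. In the multi-edge case, the same conclusion holds: a short bijective count shows that sequentially inserting the $m$ darts $uv$ one at a time into the currently available gaps puts each cyclic extension of $\pi_u$ containing those $m$ extra darts in bijection with a unique insertion sequence, so each extension is produced with probability $(\deg_G(u)-1)!/(\deg_G(u)+m-1)!$. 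The choices across different neighbors $u$ and Step~2 are mutually independent, so the resulting $\Pi'$ is uniform on $\rho^{-1}(\Pi)$.

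To conclude, if $\Pi$ is drawn uniformly from the $n_G$ embeddings of $G$ and then extended by the randomized procedure, the probability of obtaining any specific $\Pi' \in \rho^{-1}(\Pi)$ is
\[
  \frac{1}{n_G}\cdot\frac{1}{N} \;=\; \frac{1}{n_{G'}},
\]
which is independent of $\Pi'$; hence $\Pi'$ is a uniformly random embedding of $G'$. The only mildly technical step is the bijective argument for sequential insertion in the multi-edge case; everything else is bookkeeping about how $N$ factors the extension count through $n_G$ and $n_{G'}$.
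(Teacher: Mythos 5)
Your proof is correct and follows essentially the same route as the paper's: both arguments fix a target embedding, observe that exactly one $\Pi$ can produce it, and check that the rotation choice at $v$ and the insertion choices at each neighbour hit the required extension with exactly the uniform probability --- your fiber-counting formulation ($1/n_G$ times $1/N$ equals $1/n_{G'}$) is just a repackaging of the paper's direct product computation $\prod_{u} \frac{1}{(\deg_{G'}(u)-1)!}$. The only substantive difference is that you explicitly handle parallel edges between $v$ and a neighbour via the sequential-insertion bijection, a case the paper's definition and proof treat only implicitly.
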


\begin{proof}
We want to show that, for any fixed embedding of $G'$, say $\Pi'_0$, the probability of obtaining $\Pi'_0=\{ \pi'_u \mid u \in V(G)\}$ in the manner described is $\prod_{u \in V(G')} \frac{1}{(\deg_{G'}(u)-1)!}$. Indeed, this process will give $\Pi'_0$ only if $\Pi=\Pi_0$, where $\Pi_0=\{ \pi_u \mid u \in V(G)\}$ is the unique embedding of $G$ in which each $\pi_u$ is obtained from $\pi'_u$ by removing any dart with head $v$. Of course, $\Pi = \Pi_0$ with probability $\prod_{u \in V(G)} \frac{1}{(\deg_{G}(u)-1)!}$. Then, for each vertex $u$ which is adjacent to $v$ in $G'$, exactly one of the $\deg_G(u)$ possible placements of $uv$ into  $\pi_u$ will yield $\pi'_u$. Noticing that $\deg_{G}(u) = \deg_{G'}(u)-1$ whenever $u$ is adjacent to $v$, and for all other vertices we have $\deg_{G}(w) = \deg_{G'}(w)$, we see that the correct rotation at every vertex besides $v$ is obtained with probability $\prod_{u \in V(G)\setminus \{v\}} \frac{1}{(\deg_{G'}(u)-1)!}$. The result then follows from the fact that we pick the rotation $\pi'_v$ at $v$ with probability $\frac{1}{(\deg_{G'}(v)-1)!}$, and this choice is independent of all previous choices.
\end{proof}

We give an example of this process in Figure~\ref{fig:newvert}. Here we have an embedding $\Pi$ of a graph $G$ with five facial walks. Because the darts in $\pi'_v$ all have the same tail, namely $v$, we can represent it more simply as a permutation of vertices, corresponding to the head of each dart. In particular, we set $\pi'_v = (1 \, 2 \, 3 \, 4 \, 5 \, 6 \, 7)$. Then, for each $i \in \{1,2,\ldots,7\}$, we randomly select one of the appearances of $i$ in the facial walks of $\Pi$ to connect the edge $vi$. In this example,  $v$ is connected only to four faces in $G$, and it breaks all of these faces in the new embedding of $G + v$. The fifth face, however, remains unchanged.  By tracing around the new faces, we can see that adding $v$ gives us two new faces: the tracing for one of the new faces is shown in a red dashed line. %

Let us recall that the facial walks of the embedding $\Pi$ correspond to the cycles of the permutation of all darts, $F(\Pi) = \sigma_G \cdot \prod_{v \in V(G)} \pi_v$, where $\sigma_G$ is the involution that swaps each dart $xy$ incident with $x$ with the reverse dart $yx$ incident with $y$.  
To describe the faces of the embedding $\Pi'$ obtained by (randomly) adding the vertex $v$ to $\Pi$, we introduce the \emph{face permutation of $v$ relative to $\Pi'$}, denoted $\phi_v$, which is the subpermutation of $F(\Pi')$ induced by the darts incident to $v$.

For the example in Figure \ref{fig:newvert}, we can determine $\phi_v$, the face permutation of $v$ relative to $\Pi'$, by reading the anticlockwise cyclic ordering of edges incident to $v$ entering each face of $\Pi$. For instance, the cyclic traversal of the face $f_3$ gives the cycle $(2\, 3\, 5)$ of $\phi_v$. Each face in $\Pi$ gives a (possibly empty) cycle of $\phi_v$. Again using the vertex at the head of each dart incident to $v$ as a unique identifier, we can carry out this process in the given example to obtain the permutation $\phi_v = (1 \, 4)(7)(2 \, 3 \, 5)(6)$. The key observation is now that taking the product of the rotation scheme at $v$ with this permutation gives us the permutation describing the faces in $\Pi'$ which contain $v$.  This is shown in the example:  $(1 \, 2 \, 3 \, 4 \, 5 \, 6 \, 7) \circ (1 \, 4)(7)(2 \, 3 \, 5)(6) = (1 \, 3)(2 \, 5 \, 6 \, 7 \, 4)$, and the two new faces added are described by this permutation. We state this observation more formally as the following lemma whose straightforward proof is left to the reader. 

\begin{lemma} \label{lemma:addcycles}
Suppose $\Pi'$ is the embedding obtained by randomly adding $v$ to $\Pi$. Then the faces of $\Pi'$ which contain $v$ correspond to the cycles of $\pi'_v\phi_v$.
\end{lemma}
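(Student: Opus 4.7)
My plan is to trace each facial walk of $\Pi'$ that visits $v$ and recognize its cyclic structure on the darts at $v$ as a cycle of $\pi'_v \phi_v$. Since the faces of $\Pi'$ are by definition the cycles of $F(\Pi') = \sigma_{G'}\cdot\prod_{u \in V(G')} \pi'_u$, it suffices to describe how $F(\Pi')$ acts when restricted to the set $D_v$ of darts incident to $v$.

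The first step is to break each facial walk passing through $v$ into two alternating kinds of segments: a ``local'' move at $v$, governed by the rotation $\pi'_v$, and an ``external'' excursion through $G = G'-v$ that eventually returns to $v$. Starting from a dart $d \in D_v$, I would apply $\pi'_v$ to advance within the star at $v$, after which $F(\Pi')$ carries the walk out of $v$ along some dart in $G$. Iterating $F(\Pi')$ through darts outside $D_v$ until we first return to a dart in $D_v$ is, by the very definition of $\phi_v$ as the subpermutation of $F(\Pi')$ on $D_v$, precisely one application of $\phi_v$. Thus a full ``visit plus detour'' at $v$ corresponds to applying first $\pi'_v$ and then $\phi_v$.

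Composing these two motions in the order in which they are performed (and following the left-to-right composition convention used in the worked example, where $(1\,2\,3\,4\,5\,6\,7)\circ(1\,4)(7)(2\,3\,5)(6) = (1\,3)(2\,5\,6\,7\,4)$) yields $\pi'_v\phi_v$, so iterating $\pi'_v\phi_v$ starting from any $d \in D_v$ produces exactly the successive darts at $v$ along a single facial walk of $\Pi'$; its cycles are therefore in bijection with the faces of $\Pi'$ that contain $v$. The main subtlety I expect is keeping the direction conventions straight between incoming and outgoing darts at $v$: $\pi'_v$ cyclically permutes the half-edges at $v$, while $\phi_v$ is naturally read as ``exit $v$ on one dart, return on another''. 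Once the identification of $D_v$ is fixed (for example with the neighbors of $v$, as is done around Figure~\ref{fig:newvert}), the statement reduces to a mechanical composition identity for the restriction of $\sigma_{G'}\prod \pi'_u$ to $D_v$.
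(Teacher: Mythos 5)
The paper does not actually supply a proof of Lemma~\ref{lemma:addcycles} --- it is explicitly ``left to the reader'' --- so there is nothing to compare against; your first-return-map argument (alternate a local step $\pi'_v$ with an external excursion through $G$, and observe that the first-return map of $F(\Pi')$ to $D_v$ is the left-to-right composite $\pi'_v\phi_v$) is exactly the intended straightforward proof, and it is correct. One point of care: you justify the ``excursion $=$ one application of $\phi_v$'' step by citing the paper's literal phrase that $\phi_v$ is ``the subpermutation of $F(\Pi')$ induced by the darts incident to $v$,'' but read literally that subpermutation is the full first-return map you are trying to compute (it includes the $\pi'_v$ step), which would make the argument circular. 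The interpretation consistent with the worked example --- and the one your ``exit $v$ on one dart, return on another'' phrasing implicitly uses --- is that $\phi_v$ records, for each dart at $v$, the next insertion point encountered along the facial walk of the \emph{old} embedding $\Pi$, i.e.\ precisely the excursion through $G$ with no rotation at $v$ applied; stating that explicitly closes the only gap in your write-up.
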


In the next theorem we combine the framework developed in this section with the result about multistars
that was proved in Theorem~\ref{thm:asymmulti}; 
we crucially use that the bound in Theorem~\ref{thm:asymmulti} depends 
on $\lambda$ only in a limited way. 
Recall that $\Delta_d$ was defined as $H_{d-1} + \left\lceil \tfrac{d}{2} \right\rceil^{-1}$.

\begin{theorem} \label{thm:newvert}
  Suppose a vertex $v$ of degree $d \ge 1$ is randomly added to an embedding of a graph $G$.  
  Let $h(d) = \Delta_d$ for $1 \le d \le 4$ and $h(d) = \Delta_d + \frac{1}{d+1}$ for $d \ge 5$. 
  Then the expected number of faces containing $v$ is at most $h(d)$. 
\end{theorem}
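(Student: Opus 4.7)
The plan is to reduce the claim to Theorem~\ref{thm:asymmulti} via the framework of Section~\ref{sect:adding a vertex}. First, condition on the outcome of step~1 of Definition~\ref{def:addvx}, which fixes $\phi_v$ as a permutation of the darts at $v$; let $\lambda \vdash d$ be its cycle type. By Lemma~\ref{lemma:addcycles}, the conditional expected number of new faces equals $\EE_{\pi \sim C_d}[\#\mathrm{cyc}(\pi \phi_v)]$, where $\#\mathrm{cyc}(\cdot)$ counts cycles and $\pi$ is uniform over $C_d$.

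Next comes a standard conjugation argument. Because $C_d$ and $C_\lambda$ are each closed under $S_d$-conjugation and cycle structure is preserved by conjugation, the quantity $\EE_{\pi \sim C_d}[\#\mathrm{cyc}(\pi \phi_0)]$ depends on $\phi_0$ only through its cycle type $\lambda$. Counting pairs $(\pi,\phi) \in C_d \times C_\lambda$ whose product has $k$ cycles in two ways (fixing $\pi$ vs.\ fixing $\phi$) yields
\[
    \EE_{\pi \sim C_d}[\#\mathrm{cyc}(\pi\phi_0)] \;=\; \EE_{\phi \sim C_\lambda}[\#\mathrm{cyc}(\pi_0\phi)] \;=\; \EE(F_\lambda(d))
\]
for any fixed $\phi_0 \in C_\lambda$ and any fixed $\pi_0 \in C_d$, the last equality by Theorem~\ref{thm:St11}. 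By Theorem~\ref{thm:asymmulti}, this is strictly less than $\Delta_{n'} + \tfrac{1}{n'+1}$ whenever $n' := d - r(\lambda) \ge 2$; when $n' = 0$ (all parts of $\lambda$ equal $1$) the permutation $\phi_v$ is the identity, so $\pi'_v\phi_v = \pi'_v$ contributes exactly one cycle. The case $n' = 1$ cannot occur, because a non-leaf part contributes at least $2$ to $n'$.

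Averaging over the random $\lambda$, it remains to verify the numerical bound $g(n') \le h(d)$ for every $n' \in \{0, 2, 3, \dots, d\}$, where $g(n) := \Delta_n + \tfrac{1}{n+1}$. For $d \ge 5$, an elementary parity-split computation of $g(n+1) - g(n)$ (reducing to inequalities such as $2k^2 - 2k - 1 \ge 0$ for $k \ge 2$) shows that $g$ is increasing on $n \ge 3$; since moreover $g(2) = 7/3 < 31/12 = g(5)$, the maximum of $g$ over $\{0, 2, 3, \dots, d\}$ equals $g(d) = h(d)$. For $1 \le d \le 4$ we check each partition directly: any leaf parts of $\lambda$ reduce the problem to a smaller dipole (of value at most $\Delta_{d-1} \le \Delta_d$), and the only remaining nontrivial case is $\lambda = (2,2)$ at $d = 4$, where a short computation from Theorem~\ref{thm:St11} (computing $(1-E^2)^2(q+4)_5$ and differentiating at $q=1$) gives $\EE(F_{(2,2)}(4)) = 7/3 = \Delta_4$.

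The main obstacle is the final numerical verification, since $g$ is not globally monotone — it dips from $g(2) = 7/3$ down to $g(3) = 9/4$ before climbing. Once one confirms that the maximum over admissible $n'$ is indeed $h(d)$ (via the monotonicity for $d \ge 5$ and by hand for $d \le 4$), the theorem follows immediately by combining Lemma~\ref{lemma:addcycles}, the conjugation symmetry, and Theorem~\ref{thm:asymmulti}.
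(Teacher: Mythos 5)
Your proposal is correct and follows essentially the same route as the paper's proof: condition on the outcome of Step~1 of Definition~\ref{def:addvx}, invoke Lemma~\ref{lemma:addcycles} and the conjugation symmetry to identify the conditional expectation with $\EE(F_\lambda(d))$, apply Theorem~\ref{thm:asymmulti} for the large cases, verify $\lambda=(2,2)$ and the other small/leaf cases directly, and conclude by monotonicity. Your explicit check that the relevant maximum is attained at $n'=d$ (despite $g$ dipping between $n'=2$ and $n'=3$) is a slightly more careful rendering of the paper's terse ``as $h$ is monotone,'' but it is not a different argument.
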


\begin{proof}
  Let $d = \deg_{G'}(v)$ and arbitrarily assign the darts incident to $v$ integers in the set $\{1,2,\ldots,d\}$. Step 1 of the process described in Definition \ref{def:addvx} then defines a partition $\lambda \vdash d$: we say that $vu$ and $vw$ are equivalent if they are inserted into the same face of $\Pi$.

  We do the estimate separately for each possible outcome of Step~1. 
  When we fix any such outcome, we also fix $\lambda \vdash d$ and a face permutation $\phi_v \in C_\lambda$.
  By Lemma~\ref{lemma:addcycles}
  we need to estimate the expected number of cycles of $\pi'_v \circ \phi_v$, with $\phi_v$ being a fixed permutation in $C_\lambda$
  and $\pi'_v$ a uniformly random unicyclic permutation. By symmetry, this number is the same for every $\phi_v \in C_\lambda$ (with $\lambda$ fixed) and thus it is also the same as the expected number of faces of a multistar $K_\lambda(d)$.
  For $d\ge 5$ we apply Theorem~\ref{thm:asymmulti} which tells us that the expected value is at most $h(d')$ for $5\le d' = d-r(\lambda)$.
  For $d'=0$, vertex $v$ trivially contains only one face.
  For $d'=2$, vertex $v$ creates a dipole with two edges. %
  The same is true for $d'=3$. %
  For $d'=4$, we observe that the only non-trivial partition that does not lead to a dipole is $\lambda=(2,2)$.
We can easily verify that the multistar corresponding to such $\lambda$ has the expected number of faces $\frac{7}{3}$, which is exactly the same as the dipole with four edges.
Thus for $1\le d\le4$ the final bound follows by Corollary~\ref{cor:dipoleE}.
 For $d\ge 5$ we use the above bounds for each version of Step 1 of Definition~\ref{def:addvx}.
 We conclude that the expected number of faces containing $v$ is at most $h(d)$ as $h$ is monotone. %
  This completes the proof. 
\end{proof}

We can use Theorem~\ref{thm:newvert} to find general bounds on the expected number of faces in random embeddings of graphs. Given a graph $G$, let $F$ denote the random variable for the number of faces in a random embedding of $G$. It immediately follows from Theorem~\ref{thm:newvert} that, if $G$ has maximum degree $d\ge 2$, then 
$$\EE[F] < n \, \bigl(\log(d)+\tfrac{5}{3}\bigr).$$ 
This result can be strengthened to allow vertices of degree larger than $d$ as long as the ``maximum average degree'' is at most $d$. The notion of the maximum average degree comes from the theory of sparse graphs (see \cite{NOdM12}). To be more precise, we say that a graph is \emph{$d$-degenerate} if there is an ordering $v_1,\ldots,v_n$ of its vertices such that $d_i \le d$ for each $i\in [n]$, where $d_i$ denotes the number of neighbors $v_j$ of $v_i$ with $j<i$. We call $d_i$ the \emph{back-degree} of the vertex $v_i$. We make a slight change to the definition above:
whenever $d_i=0$ we redefine it to the value $d_i=1$ instead.

\begin{theorem}\label{thm:degbnd}
  Let $G$ be a connected graph of order $n$. Given a linear order of vertices $v_1,\dots,v_n$ with respective back-degrees $d_i$ $(1\le i \le n)$, 
  we have.
\begin{equation}
     \EE [F] \le 1 + \sum_{i=3}^n \log {d_i^*},
     \label{eq:thm8-1}
\end{equation}
where $d_i^*:=d_i$ if $d_i\neq 2$ and $d_i^*:=e$ if $d_i=2$. We also have
\begin{equation}
     \EE [F] \le 1 + \sum_{i=3}^n H_{d_i-1}.
     \label{eq:thm8-2}
\end{equation}
\end{theorem}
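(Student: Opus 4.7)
The plan is a telescoping argument: we build the random embedding of $G$ vertex by vertex in the given order using Definition~\ref{def:addvx}, and bound the expected change in face count at each step via Theorem~\ref{thm:newvert}. By the lemma preceding Theorem~\ref{thm:newvert}, the embedding $\Pi_i$ obtained after $i$ insertions is uniformly distributed over embeddings of the induced subgraph $G_i := G[\{v_1,\dots,v_i\}]$. Writing $F_i$ for its number of faces, we have $F_1 = 1$ and aim to bound $\EE[F_n]$.

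The key identity is
\[
F_i - F_{i-1} = N_i - r_i,
\]
where $r_i$ is the number of distinct faces of $\Pi_{i-1}$ that receive at least one dart of $v_i$ during Step~1 of Definition~\ref{def:addvx}, and $N_i$ is the number of faces of $\Pi_i$ containing $v_i$. Indeed, only the $r_i$ faces ``touched'' by $v_i$ are destroyed, and they are replaced by exactly the $N_i$ new faces containing $v_i$. Whenever $d_i \geq 1$ we have $r_i \geq 1$ deterministically, and Theorem~\ref{thm:newvert} yields $\EE[N_i] \leq h(d_i)$, so that $\EE[F_i - F_{i-1}] \leq h(d_i) - 1$. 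Telescoping from $F_1 = 1$, and using that $d_2 \leq 1$ (so $h(d_2) - 1 = 0$), we obtain
\[
\EE[F_n] \leq 1 + \sum_{i=3}^n \bigl(h(d_i) - 1\bigr).
\]

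Both stated bounds then reduce to per-$d$ numerical comparisons. Inequality~\eqref{eq:thm8-2} follows from $h(d) - 1 \leq H_{d-1}$ for every $d \geq 1$, which in turn reads $\lceil d/2 \rceil^{-1} \leq 1$ for $d \leq 4$ and $\lceil d/2 \rceil^{-1} + (d+1)^{-1} \leq 1$ for $d \geq 5$, both immediate. Inequality~\eqref{eq:thm8-1} reduces to $h(d) - 1 \leq \log(d^\ast)$; the choice $d^\ast = e$ at $d = 2$ is tailored to give equality there, the cases $d \in \{1, 3, 4\}$ are small direct checks, and for $d \geq 5$ one uses the sharp expansion $H_{d-1} = \log(d-1) + \gamma + O(1/d)$ together with $1 - \gamma > 0.42$ to absorb the overhead $\lceil d/2 \rceil^{-1} + (d+1)^{-1} - \log(d/(d-1))$.

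The main obstacle I anticipate is this last $\log$ comparison: the asymptotic gap $\log d - (h(d) - 1)$ tends to $1 - \gamma \approx 0.42$, which is small enough that a crude bound like $H_n \leq 1 + \log n$ will not close the inequality, and one has to work with a sharper harmonic estimate or verify a handful of small cases ($d \in \{5,6,7\}$) by hand. Everything else---the face-count identity $F_i - F_{i-1} = N_i - r_i$, the input from Theorem~\ref{thm:newvert}, and the telescoping---is routine.
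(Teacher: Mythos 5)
Your overall strategy---building the embedding vertex by vertex via Definition~\ref{def:addvx}, bounding the expected number of faces containing the newly inserted vertex by Theorem~\ref{thm:newvert}, subtracting the faces destroyed, and closing with the per-degree comparisons $h(d)-1\le H_{d-1}$ and $h(d)-1\le\log d^*$---is exactly the paper's, and your numerical endgame (the role of $d^*=e$ at $d=2$, the thin margin $1-\gamma$ for large $d$, and the need to check $d\in\{5,6,7\}$ carefully) is the same computation the paper's final display requires.

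There is, however, a genuine gap in the telescoping step. The ordering $v_1,\dots,v_n$ is arbitrary, so the prefix graphs $G_i$ need not be connected, and a vertex $v_i$ with $i\ge 2$ may have true back-degree $0$. At such a step $r_i=0$ and $N_i=1$, so $F_i-F_{i-1}=1$, whereas the bound you need (with the theorem's convention $d_i^*=1$, hence a contribution of $\log 1=0$ or $H_0=0$) is $\EE[F_i-F_{i-1}]\le h(1)-1=0$, which is false. Concretely, for the path on $\{v_1,v_2,v_3\}$ with edges $v_1v_3$ and $v_2v_3$, processed in the order $v_1,v_2,v_3$, you get $F_2=2$, not $F_2\le F_1=1$. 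These ``free'' extra faces are only recouped at later steps where $v_i$ attaches to $j\ge 2$ distinct components of $G_{i-1}$: there one has $r_i\ge j$, not merely $r_i\ge 1$, since at least one face of \emph{each} touched component is destroyed---and your argument never uses this. The paper handles the issue by inducting on connected graphs, applying the hypothesis separately to the $k$ components of $G-v_n$ and subtracting $k$. Your telescoping can be repaired by replacing $F_i$ with the potential $F_i-c_i$, where $c_i$ is the number of components of $G_i$: this quantity is unchanged at $d_i=0$ steps and increases by at most $N_i-1$ otherwise, and $c_n=1$ restores the claimed bound. As written, though, the step inequality you telescope does not hold.
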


\begin{proof}
We are building $G$ starting with $v_1$ and adding vertices $v_2,\dots,v_n$ to the current graph using the given order.
The proof is by induction on $n$.
The base of induction is for connected graphs on at most two vertices that clearly have only one face.

In the induction step we consider $n\geq 3$.
Consider $G':=G\setminus v_n$.
By Theorem~\ref{thm:newvert}, $v_n$ is in average in at most $H_{d_n-1} + \bigl\lceil \tfrac{d_n}{2} \bigr\rceil^{-1} + x_{d_n}$ faces, where $x_1=x_2=x_3=x_4=0$ and $x_{d_n}=\frac{1}{d_n+1}$ for $d_n \ge 5$.
Moreover, since $G$ is connected, $v_n$ is connected to each component of $G'$, and by building any embedding of $G'+v_n$, we have destroyed at least one face from each component.
Let $C_1,\ldots,C_k$ be the components of $G'$. 
We conclude the proof by the following computation: 
\[
   \EE[F] \le \sum_{i=1}^{k} \left(1+\sum_{v_j\in V(C_i)} H_{d_j-1} \right) - k + H_{d_n-1} + \left\lceil \tfrac{d_n}{2} \right\rceil^{-1}+x_{d_n}
   \le 1 + \sum_{i=3}^n  H_{d_i-1}
\]
         and
\[
   \EE[F] \le \sum_{i=1}^{k} \left(1+\sum_{v_j\in V(C_i)}\log d_j^* \right) -k + H_{d_n-1} + \left\lceil \tfrac{d_n}{2} \right\rceil^{-1}+x_{d_n} \le1+ \sum_{i=3}^n \log d_i^*. \qedhere
\]
\end{proof}

\begin{corollary}\label{cor:degen}
  Let\/ $G$ be a connected $d$-degenerate graph. If $d=2$, then $\EE[F] \le n-1$.
  If $d\ge 3$, then $\EE[F] \le 1 + (n-2) \log d$.
\end{corollary}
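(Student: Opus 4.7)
The plan is to simply instantiate Theorem~\ref{thm:degbnd} at a degenerate ordering of $G$. Since $G$ is connected and $d$-degenerate, there is an ordering $v_1,\dots,v_n$ of $V(G)$ with back-degrees $d_i \le d$ for all $i$ (recalling the convention that we replace $d_i = 0$ by $d_i = 1$). So the task reduces to bounding each summand in the two inequalities of Theorem~\ref{thm:degbnd} by a constant depending only on $d$, and multiplying by the number of summands, $n-2$.

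For the case $d = 2$, I would use inequality \eqref{eq:thm8-2}. Every back-degree satisfies $d_i \in \{1,2\}$, and in both cases $H_{d_i-1} \le H_1 = 1$. Summing from $i = 3$ to $n$ gives $\sum_{i=3}^n H_{d_i-1} \le n-2$, and hence $\EE[F] \le 1 + (n-2) = n-1$, as claimed.

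For the case $d \ge 3$, I would use inequality \eqref{eq:thm8-1}, and verify that $\log d_i^* \le \log d$ for each $i$. There are three subcases: if $d_i \ge 3$, then $d_i^* = d_i \le d$, so the bound is immediate; if $d_i = 2$, then $d_i^* = e$, and since $d \ge 3 > e$ we have $\log d_i^* = 1 < \log d$; if $d_i = 1$, then $d_i^* = 1$ and $\log d_i^* = 0 \le \log d$. Summing over $i = 3,\dots,n$ yields $\EE[F] \le 1 + (n-2)\log d$. There is no real obstacle here: the only mildly delicate point is checking the case $d_i = 2$, which is precisely why the exceptional definition $d_i^* = e$ was built into Theorem~\ref{thm:degbnd}, so everything is arranged to make this corollary a straightforward consequence.
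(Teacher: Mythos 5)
Your proposal is correct and is exactly the intended derivation: the paper states this corollary without proof as an immediate instantiation of Theorem~\ref{thm:degbnd} at a degeneracy ordering, using \eqref{eq:thm8-2} with $H_{d_i-1}\le 1$ for $d=2$ and \eqref{eq:thm8-1} with $\log d_i^*\le\log d$ for $d\ge 3$. Your case check for $d_i=2$ (where $d_i^*=e$ and $\log e = 1 < \log d$ since $d\ge 3 > e$) is precisely the point of the exceptional definition of $d_i^*$.
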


Theorem~\ref{thm:degbnd} improves upon the previous best known general bound, proven by Stahl in \cite{St91Short}. Letting $d'_1,d'_2,\ldots,d'_n$ denote the degree sequence of $G$, Stahl showed that the expected number of faces in a random embedding of $G$ is at most
\begin{equation}2n+\sum_{i=1}^{n} \log(d'_i).
\label{eq:stahlgn}
\end{equation}
Thus, our bound (\ref{eq:thm8-1}) yields an improvement by exchanging the term $2n$ with $1$
and replacing $d'_i$ with smaller values $d_i^*$ (when all $d'_i \ge 3$).
A similar improvement has been made with (\ref{eq:thm8-2}), which should be compared with the bound $\EE(F) \le n + \sum_{i=1}^{n}H_{d'_i -1}$ from \cite{St91Short}.

In a separate paper \cite{St91Linear}, Stahl described some infinite families of graphs for which $\EE(F)$ is linear in the number of vertices. Specifically, he showed, building the graph $G_n$ by linking together $n$ copies of a fixed graph $H$ ``in a consistent manner so as to form a chain,''  that the expected number of faces in a random embedding of $G_n$ is $\Theta(|V(G_n)|)$. In such \emph{linear families} of graphs, the maximum degree of $G_n$ is at most twice the maximum degree of $H$, itself an absolute constant with respect to $n$, so that the bound in Theorem~\ref{thm:degbnd} is tight up to a constant factor. 

At first sight, it may seem that the class of graphs satisfying $\EE(F) = \Theta(n)$ is very special. However, the following result shows that a wide variety of graphs have this property.

\begin{proposition}\label{prop:linear}
Let $G$ be a graph and let $\mathcal C$ be some family of cycles in $G$. Suppose that each cycle in $\mathcal C$ has length at most $\ell$ and all of its vertices have degree at most $d$. If $d\ne2$, then the expected number of faces in a random embedding of $G$ is at least $\frac{2 |\mathcal C|}{(d-1)^\ell}$. 
\label{thm:linearlb}
\end{proposition}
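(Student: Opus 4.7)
The plan is to apply linearity of expectation: for each cycle $C \in \mathcal{C}$, I will lower-bound the probability that $C$ appears as the boundary of a face in the random embedding of $G$, and then sum over $C$.

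For each $C = v_1 v_2 \cdots v_k \in \mathcal{C}$ (with $k \le \ell$), let $X_C$ be the number of faces of the random embedding whose facial walk traces $C$ in one of its two orientations; clearly $X_C \in \{0,1,2\}$. Since each face of an embedding determines its facial walk uniquely, distinct cycles in $\mathcal{C}$ contribute disjoint sets of faces, and hence $F \ge \sum_{C \in \mathcal{C}} X_C$. By linearity of expectation, it suffices to show that $\EE[X_C] \ge 2/(d-1)^\ell$ for every $C \in \mathcal{C}$.

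The next step is to decompose $X_C$ as a sum of two indicators, one per orientation of $C$, and estimate each one. Fixing the orientation $v_1 \to v_2 \to \cdots \to v_k \to v_1$, the walk is facial if and only if, at each vertex $v_i$, the dart $v_i v_{i-1}$ is immediately followed by the dart $v_i v_{i+1}$ in the cyclic rotation $\pi_{v_i}$ (with indices modulo $k$). A quick count of cyclic permutations shows that two specified darts at a vertex of degree $\deg(v_i)$ are cyclically consecutive with probability $1/(\deg(v_i)-1)$; combined with independence of the rotations across vertices and the hypothesis $\deg(v_i) \le d$, this yields
\[
 \Pr\bigl[\text{this orientation of } C \text{ is facial}\bigr] \;=\; \prod_{i=1}^{k} \frac{1}{\deg(v_i) - 1} \;\ge\; \frac{1}{(d-1)^\ell}.
\]
Applying the same bound to the reverse orientation gives $\EE[X_C] \ge 2/(d-1)^\ell$, and summing over $C \in \mathcal{C}$ completes the argument.

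There is no serious obstacle: the whole proof hinges on the elementary probability $1/(\deg(v_i)-1)$ and the independence of rotations at different vertices, both essentially routine. The small point meriting verification is that distinct $C_1, C_2 \in \mathcal{C}$ really contribute disjoint faces, which follows from the uniqueness of a face's facial walk. The exclusion $d \ne 2$ simply sidesteps degenerate cases: for $d = 1$ the hypothesis is vacuous (a cycle vertex has degree at least $2$), while for $d = 2$ every cycle in $\mathcal{C}$ is a connected component of $G$ and the estimate becomes trivially tight.
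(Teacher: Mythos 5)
Your proof is correct and follows essentially the same route as the paper's: linearity of expectation over $\mathcal{C}$, the face-tracing condition at each vertex holding with probability $1/(\deg(v_i)-1)$ by independence of the rotations, and a factor of $2$ for the two orientations. The only cosmetic difference is that you let $X_C$ count the faces tracing $C$ (a value in $\{0,1,2\}$) rather than using an indicator and summing the probabilities of the two disjoint orientation events, which handles the degenerate all-degree-$2$ case slightly more cleanly.
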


\begin{proof}
Consider a map $M$ of $G$ chosen uniformly at random. For each $C \in \mathcal{C}$ let $X_C$ be the indicator random variable for the event that ``$C$ is a facial walk of $M$.'' Notice that $F \ge \sum_{C \in \mathcal{C}} X_C$. Given a cycle $C=u_1u_2\dots u_k \in \mathcal{C}$ of length $k$, let $e_i = u_iu_{i+1}$ for $1 \leq i \leq k$, taking indices modulo $k$. Then $C$ is a face of $M$ if and only if one of the following holds: (1) $\pi_{u_i}(e_i)=e_{i-1}$ for $1 \leq i \leq k$ or (2) $\pi_{u_i}(e_{i-1}) = e_i$ for $1 \leq i \leq k$. By counting, it is easy to check that the probability that a random cyclic permutation of $\{1,2,\ldots,t\}$ sends 1 to 2 is $\frac{1}{t-1}$. Thus, as the rotation at each vertex is independent and it is impossible for both (1) and (2) to occur simultaneously (unless all vertices on $C$ are of degree 2), we have 
$$\EE(X_C) = 2 \prod_{i=1}^k \frac{1}{\deg(u_i)-1} \geq \frac{2}{(d-1)^\ell}.$$
The result now follows by using the linearity of expectation.
\end{proof}

Combining Theorem~\ref{thm:degbnd} with Proposition~\ref{thm:linearlb}, we see that any graph with bounded maximum degree and linearly many short cycles has linearly many expected faces. Although Proposition~\ref{prop:linear} describes general classes of graphs with linear expected number of faces, it is believed that this is rare.
In fact, Stahl conjectured \cite[Conjecture 4.3]{St90} that for almost all graphs with $q$ edges, the expected number of faces in a random embedding is close to $H_{2q}$.

We were unable to find any graph family with unbounded degeneracy for which the bound in Theorem~\ref{thm:degbnd} is tight. Indeed, we believe that such graphs do not exist and we propose a general conjecture that is given in the introduction as Conjecture \ref{conj:simple} (for simple graphs) and Conjecture \ref{endingconj} when we allow edges of high multiplicity.

\section*{Acknowledgements}

The authors would like to thank Amarpreet Rattan for pointing out relevant literature and Ladislav Stacho for helpful initial discussions on the topic.

\bibliographystyle{plainurl}
\bibliography{bibliography}

\end{document}